 \newtheorem{Thm}{Theorem}[section]
 \newtheorem{Lem}[Thm]{Lemma}
 \newtheorem{Prop}[Thm]{Proposition}
 \newtheorem{Cor}[Thm]{Corollary}
\theoremstyle{remark}
 \newtheorem{Expl}[Thm]{Example}
 \newtheorem{Rem}[Thm]{Remark}
\theoremstyle{definition}
\numberwithin{equation}{section}
\newcommand\Z{\mathbb Z}
\newcommand\CC{\mathbb C}
\newcommand\ol[1]{\overline{#1}}
\renewcommand\S{\mathbb S}
\newcommand\A{\mathbb A}
\newcommand\disj{\sqcup}
\newcommand\adhit{\triangleright}
\newcommand\inv{^{-1}}
\def\HM#1.#2.#3.#4.{{^{#1}_{#3}\mathcal M^{#2}_{#4}}}
\newcommand\Ind{\operatorname{Ind}}
\newcommand\Stab{\operatorname{Stab}}
\newcommand\Rep{\operatorname{Rep}}
\newcommand\Irr{\operatorname{Irr}}
\newcommand\Indtobim{\mathcal F}
\newcommand\C{\mathcal C}
\newcommand\Cat[2]{\C(#1,#2)}
\newcommand\Obj[2]{\Indtobim_{#1}(#2)}
\newcommand\supp{\operatorname{supp}}
\newcommand\ord{\operatorname{ord}}
\newcommand\subgroup{\leq}
\begin{document}
 \title[]{Frobenius-Schur indicators for some fusion categories associated to symmetric and alternating groups}
 \author{Peter Schauenburg}
 \address{Institut de Math{\'e}matiques de Bourgogne --- UMR 5584 du CNRS\\
 Universit{\'e} de Bourgogne\\
 Facult{\'e} des Sciences Mirande\\
 9 avenue Alain Savary\\
 BP 47870 21078 Dijon Cedex\\
 France
 }
 \email{peter.schauenburg@u-bourgogne.fr}
 \subjclass[2010]{18D10,16T05,20C15}
 \keywords{Fusion category,Hopf algebra,Frobenius-Schur indicator}
 \begin{abstract}
 We calculate Frobenius-Schur indicator values for some fusion categories obtained from inclusions of finite groups $H\subset G$, where more concretely $G$ is symmetric or alternating, and $H$ is a symmetric, alternating or cyclic group. Our work is strongly related to earlier results by Kashina-Mason-Montgomery, Jedwab-Montgomery, and Timmer for bismash product Hopf algebras obtained from exact factorizations of groups. We can generalize some of their results, settle some open questions and offer shorter proofs; this already pertains to the Hopf algebra case, while our results also cover fusion categories not associated to Hopf algebras.
 \end{abstract}
 \maketitle

\section{Introduction}
\label{sec:introduction}
Having recently found general formulas for computing higher Frobenius-Schur indicators in group-theoretical fusion categories \cite{Sch:CHFSIFCCIFG,2015arXiv150202906S}, we take a step back, as it were, and put the formulas from \cite{Sch:CHFSIFCCIFG} to work in the special case of degree two indicators. Thus, our examples pertain to the special case of a group-theoretical fusion category $\C(G,H)$ defined ``without cocycles'' from a finite group $G$ and a subgroup $H$; this category can be viewed as the category of $G$-graded vectors spaces endowed with a two-sided action of the subgroup $H$ compatible with the grading.

The general formulas from \cite{Sch:CHFSIFCCIFG,2015arXiv150202906S} for arbitrary degree indiators feature a summation of character values over a set of group elements that does not seem to have a particularly transparent structure. In the case of degree two indicators, however, things simplify considerably: If the sum is not empty outright, then it can be written as a sum over all the elements of a certain (stabilizer) subgroup. These sums are much easier to handle, in particular they allow the indicators for the fusion category $\C(G,H)$ to be written as twisted indicators in the sense of \cite{MR1078503}.

In the case where the subgroup $H\subset G$ is part of an exact factorization $G=HQ$ with a second subgroup $Q\subset G$, the category $\C(G,H)$ is equivalent by \cite{MR1887584} to the module category of a bismash product Hopf algebra $\CC^Q\#\CC H$. (In the presence of cohomological data, one can similarly obtain bicross product Hopf algebras.) For such bismash product Hopf algebras, many results on indicator values were previously obtained (and some questions asked) by other authors \cite{MR1919158,MR2471448,2014arXiv1412.4725}, and these results have provided part of the motivation for the present paper. As it turns out, the indicator formulas for $\C(G,H)$ are not only applicable in more general situations (where a direct factorization is not available), but they also offer a significant advantage in the special case of a bismash product. The reason seems to be that objects in $\C(G,H)$ are described in terms of the double cosets of $H$ in $G$. Obtaining statements on the indicators is sometimes made easier by a good choice of double coset representative. Having a direct factorization $G=HQ$ (needed to obtain a bismash product Hopf algebra) implicitly imposes a choice of coset representatives (namely, among the elements of $Q$) which may not be advantageous.

\section{Preliminaries}
\label{sec:preliminaries}

For an inclusion of finite groups $H\subset G$, recall that simple objects in the group-theoretical fusion category $\Cat GH:=\HM G..H.H.=\C(G,H,1,1)$ are parametrized by pairs $(g,\chi)$ with $\chi\in\Irr(S(g))$, where $S(g)=\Stab_{H}(gH)=H\cap g\adhit H$. The indicators of such a simple are given by \cite{Sch:CHFSIFCCIFG}:
\begin{equation}\label{eq:24}
  \nu_m(g,\chi,H)=\nu_m(g,\chi)=\frac 1{|S(g)|}\sum_{\substack{x\in H\\(gx)^m\in H}}\ol\chi((gx)^m).
\end{equation}
Here, we have adopted a somewhat lighter notation than in \cite{Sch:CHFSIFCCIFG}, where $\Indtobim_g\colon\Rep(S(g))\to \Cat GH$ was used to denote the functor translating group representations to objects in the relevant category. However, we do have need sometimes to recall the subgroup $H$ with respect to which the indicator is calculated (while the group $G$ does not influence the result except by providing a place for $g$ to live in).

Note that $(gx)^m\in H\Leftrightarrow (gx)^m\in S(g)$.

The simple objects associated to two elements $g,g'$ are the same if and only if $g$ and $g'$ lie in the same double coset in $H\backslash G/H$. More precisely the two stabilizer groups $S(g)$ and $S(g')$ are then conjugate in $H$, and $\nu_2(g,\chi)=\nu_2(g',\chi')$ where $\chi'$ is obtained from $\chi$ by conjugation.

\section{General Results}
\label{sec:general-results}

The following remark is an obvious consequence of the indicator formula above.
\begin{Rem}\label{rem:2}
 Let $G$ be a finite group and $H\leq G$. Let $g\in G$. Assume that there exists $\chi\in\Irr(S(g))$ with $\nu_m(g,\chi)\neq 0$. Then the double coset of $g$ contains an element $g'$ with $(g')^m\in S(g)$.
\end{Rem}

\begin{Prop}\label{Prop:nutwo-gen}
  Let $G$ be a finite group, and $H\leq G$.
  Then all objects of $\Cat GH$ that have nonzero Frobenius-Schur indicator are of the form $\Obj g\chi$ with $g^2\in H$ and $\ord(g)$ a power of two.

  Assume that $g\in G\setminus H$ satisfies $g^2\in H$. Let $S=\Stab_H(gH)$, and $\chi$ an irreducible character of $S$. Then
  \begin{enumerate}
    \item For $x\in H$ we have $(gx)^2\in H\Leftrightarrow (gx)^2\in S\Leftrightarrow x\in S$.
    \item $g\adhit S=g\inv\adhit S=S$.
    \item $\hat S:=S\sqcup gS=S\sqcup Sg\leq G$ and $[\hat S\colon S]=2$.
    \item $\nu_2(g,\chi)=\nu^g(\chi)$ is the twisted indicator of $\chi$ with respect to $g\in \hat S$ in the sense of \cite{MR1078503}.
  \end{enumerate}
  In particular we have
    \begin{align}
        \nu_2(g,\chi)&=\frac 1{|S|}\sum_{x\in S}\chi((gx)^2)\label{eq:18}\\
        &=\frac1{|S|}\sum_{x\in S}\chi(x(g\adhit x)g^2)\label{eq:19}\\
        &=\frac1{|S|}\sum_{x\in S}\chi((g^{- 1}\adhit x)xg^2)\label{eq:20}\\
        &=\frac1{|S|}\sum_{x\in\hat S}\chi(x^2)-\nu_2(\chi)\label{eq:21}\\
        &=\nu_2\left(\Ind_S^{\hat S}(\chi)\right)-\nu_2(\chi)\label{eq:22}\\
        &=\label{eq:23}
        \begin{cases}
          2\nu_2(\hat\chi)-\nu_2(\chi)&\text{if }g\adhit\chi=\chi\\
          \nu_2(\hat\chi)-\nu_2(\chi)&\text{if }g\adhit\chi\neq\chi.
        \end{cases}
      \end{align}
      where $\hat\chi$ is an irreducible  character of $\hat S$ whose restriction contains $\chi$.
\end{Prop}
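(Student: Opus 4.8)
\emph{Strategy.} I would prove the group-theoretic statements (1)--(3) first, then read the indicator identities \eqref{eq:18}--\eqref{eq:23} off them, and dispose of the opening sentence last by a Sylow argument. The whole thing rests on the observation that
\[
  H\cap g\adhit H \;=\; H\cap g\inv\adhit H \;=\; S
\]
whenever $g^{2}\in H$: if $y\in H$ and $g\adhit y=h\in H$, then $y=g\adhit(g^{-2}hg^{2})$ with $g^{-2}hg^{2}\in H$, so $y\in g\adhit H$, and the reverse inclusion (and the symmetric statement) is the same manipulation, using only $g^{2}\in H$. Applying $g\adhit(-)$ to $S=H\cap g\inv\adhit H$ gives $g\adhit S=(g\adhit H)\cap H=S$, hence also $g\inv\adhit S=S$, which is (2); and $g^{2}\in H$ together with $g\inv g^{2}g=g^{2}\in H$ gives $g^{2}\in S$. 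For (1): for $x\in H$ we have $(gx)^{2}=(g\adhit x)\,g^{2}x$, so $(gx)^{2}\in H\Leftrightarrow g\adhit x\in H\Leftrightarrow x\in g\inv\adhit H$, which with $x\in H$ says $x\in S$; the equivalence $(gx)^{2}\in H\Leftrightarrow(gx)^{2}\in S$ is the remark recorded after \eqref{eq:24}. For (3): $g\notin H\supseteq S$ forces $gS\cap S=\varnothing$, so $S\sqcup gS$ has $2|S|$ elements; $gS=Sg$ since $gs=(g\adhit s)g$ with $g\adhit s\in S$; and $S\sqcup gS$ is closed under multiplication, the only nontrivial cases reducing via (2) and $g^{2}\in S$ to $s_{1}gs_{2}=g(g\inv\adhit s_{1})s_{2}\in gS$ and $gs_{1}gs_{2}=g^{2}(g\inv\adhit s_{1})s_{2}\in S$. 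A nonempty finite subset of $G$ closed under multiplication is a subgroup, so $\hat S\leq G$ and $[\hat S:S]=2$.

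\emph{The indicator identities.} Substituting part (1) into \eqref{eq:24} with $m=2$ gives $\nu_{2}(g,\chi)=\tfrac1{|S|}\sum_{x\in S}\ol\chi((gx)^{2})$; since $x\mapsto(g\inv\adhit x\inv)g^{-2}$ is a bijection of $S$ carrying $gx$ to $(gx)\inv$, the multiset $\{(gx)^{2}:x\in S\}$ is stable under inversion, so $\ol\chi$ may be replaced by $\chi$. This is \eqref{eq:18}, and it is exactly the twisted indicator $\nu^{g}(\chi)$ of \cite{MR1078503}, which is (4). Formulas \eqref{eq:19} and \eqref{eq:20} hold termwise, since $(gx)^{2}=(g\adhit x)g^{2}x$ is conjugate to $x(g\adhit x)g^{2}$ and to $(xg)^{2}=g\adhit\bigl((g\inv\adhit x)xg^{2}\bigr)$. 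For \eqref{eq:21}, split $\hat S=S\sqcup gS$ and use that every square in $\hat S$ lies in $S$, getting $\sum_{x\in\hat S}\chi(x^{2})=|S|\nu_{2}(\chi)+|S|\nu_{2}(g,\chi)$ by \eqref{eq:18}. For \eqref{eq:22}, the induced-character formula for the index-two normal subgroup $S\subseteq\hat S$ gives $(\Ind_{S}^{\hat S}\chi)(y)=\chi(y)+\chi(g\inv yg)$ for $y\in S$ and $0$ otherwise; as conjugation by $g$ permutes $\hat S$ and fixes $S$, one gets $\sum_{x\in\hat S}(\Ind_{S}^{\hat S}\chi)(x^{2})=2\sum_{x\in\hat S}\chi(x^{2})$, so $\nu_{2}(\Ind_{S}^{\hat S}\chi)=\tfrac1{|S|}\sum_{x\in\hat S}\chi(x^{2})$ and \eqref{eq:21} becomes \eqref{eq:22}. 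Finally \eqref{eq:23} is Clifford theory: $\Ind_{S}^{\hat S}\chi$ is irreducible precisely when $g\adhit\chi\neq\chi$, and is otherwise $\hat\chi_{1}\oplus\hat\chi_{2}$ with $\hat\chi_{2}=\hat\chi_{1}\otimes\epsilon$ for the order-two character $\epsilon$ of $\hat S/S$; since $\epsilon(x)^{2}=1$ we get $\nu_{2}(\hat\chi_{1})=\nu_{2}(\hat\chi_{2})$, so $\nu_{2}(\Ind_{S}^{\hat S}\chi)$ is $\nu_{2}(\hat\chi)$ resp.\ $2\nu_{2}(\hat\chi)$ for any irreducible constituent $\hat\chi$ of $\Res\Ind_{S}^{\hat S}\chi$, and \eqref{eq:22} gives \eqref{eq:23}.

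\emph{The opening sentence and the main difficulty.} If $\nu_{2}(g,\chi)\neq0$, then \Cref{rem:2} yields $g_{1}$ in the double coset of $g$ with $g_{1}^{2}\in H$; replacing $g$ by $g_{1}$ changes neither the object nor, up to conjugacy, $\chi$, and if $g_{1}\in H$ the double coset is $H$ and we take the representative $e$. Otherwise form $\hat S$ as in (3): a Sylow $2$-subgroup $P$ of $\hat S$ cannot lie in $S$, since $|P|$ is twice the $2$-part of $|S|$ while $|P\cap S|$ is at most that $2$-part, so any $p\in P\setminus S\subseteq gS\subseteq gH$ is a double-coset representative with $p^{2}\in S\subseteq H$ by (1) and $\ord(p)$ a power of two, which exhibits the object as $\Obj p{\chi'}$ of the asserted form. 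The step needing the most care is the identity $H\cap g\adhit H=H\cap g\inv\adhit H$ underlying (1)--(3): it is short, but it is exactly where $g^{2}\in H$ enters and it is what makes $\nu_{2}(g,\chi)$ a genuine twisted indicator; after that, the only other points requiring attention are matching \eqref{eq:18} to the precise orientation conventions of \cite{MR1078503} and keeping the two Clifford constituents straight in \eqref{eq:23}.
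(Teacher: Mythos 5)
Your proof is correct, and for the core of the proposition --- parts (1)--(4) and the chain of identities \eqref{eq:18}--\eqref{eq:23} --- it follows essentially the same route as the paper: the same computation $gxgx=(g\adhit x)g^2x=g^2(g\inv\adhit x)x$, the same splitting $\hat S=S\sqcup gS$ for \eqref{eq:21}, the same induced-character manipulation for \eqref{eq:22}, and the standard Clifford analysis for \eqref{eq:23} (which the paper simply cites from Kawanaka--Matsuyama but you spell out). You actually supply two details the paper leaves implicit: the justification that $\ol\chi$ in \eqref{eq:24} may be replaced by $\chi$ in \eqref{eq:18}, via the inversion-stability of the multiset $\{(gx)^2:x\in S\}$, and the identity $H\cap g\adhit H=H\cap g\inv\adhit H$ as the organizing lemma behind (1)--(3); both are sound (just make sure, for \eqref{eq:19}--\eqref{eq:20}, to note that the conjugations you invoke compose to conjugation by an element of $S$ --- namely $x$ resp.\ $g^2$ --- since $\chi$ is only a class function on $S$, not on $G$). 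The one genuinely different piece is the opening claim that $\ord(g)$ may be taken a power of two: the paper writes $\ord(g)=2^k(2\ell+1)$ and replaces $g$ by the explicit representative $g^{2\ell+1}=g(g^2)^\ell\in gH$, whereas you take any element of a Sylow $2$-subgroup of $\hat S$ lying outside $S$. Both work; the paper's trick is more elementary and produces a concrete power of $g$, while yours has the minor structural advantage of locating the good representative inside the group $\hat S$ that the rest of the proposition is built around.
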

\begin{proof}
  We already know that we can assume $g^2\in H$. Let $\ord(g)=2^k(2\ell+1)$. Then $g^{2\ell+1}=g(g^2)^\ell\in gH\subset HgH$ and $(g^{2\ell+1})^{2^k}=e$.

  For any $m$, $(gx)^m\in H\Rightarrow (gx)^m\in S$ was already observed in \cite{Sch:CHFSIFCCIFG}. Now for $x\in H$
  \begin{equation*}
    gxgx\in H\Leftrightarrow gxg\in H\Leftrightarrow xg\in g\inv H=g\inv g^2H=gH\Leftrightarrow x\in g\adhit H.
  \end{equation*}

  $g\adhit S=g\adhit(H\cap (g\adhit H))=(g\adhit H)\cap (g^2\adhit H)=(g\adhit H)\cap H=S$ holds since $g^2\in H$, and the assertions on $\hat S$ are a direct consequence of this and $g^2\in S$.

  Now \eqref{eq:18}, which is the definition of the twisted indicator in \cite{MR1078503}, is a direct consequence of \eqref{eq:24}. The two versions \eqref{eq:19},\eqref{eq:20} follow from $gxgx=(g\adhit x)g^2x=g^2(g\inv\adhit x)x$. Since $\hat S$ is the disjoint union of $S$ and $gS$, we obtain \eqref{eq:21}, which is also implicit in \cite{MR1078503}.

  \eqref{eq:22} is a special case of \cite[Lemma 2.1]{MR1078503}, we prove it for completeness: By a standard formula for induced characters, $\Ind_S^{\hat S}(\chi)(y)=\chi(y)+\chi(g\adhit y)$ for $y\in S$. Now $x^2\in S$ for all $x\in\hat S$, so that
  \begin{align*}
    \nu_2\left(\Ind_S^{\hat S}(\chi)\right)&=\frac1{|\hat S|}\sum_{x\in\hat S}\Ind_S^{\hat S}(\chi)(x^2)\\
    &=\frac1{|\hat S|}\sum_{x\in\hat S}(\chi(x^2)+\chi(g\adhit x^2))\\
    &=\frac12\left(\frac1{|S|}\sum_{x\in\hat S}\chi(x^2)+\frac1{|S|}\sum_{x\in\hat S}\chi((g\adhit x)^2))\right)\\
    &=\frac1{|S|}\sum_{x\in\hat S}\chi(x^2).
  \end{align*}
  The last formula \eqref{eq:23} is \cite[Lemma 2.3]{MR1078503}, it follows from \eqref{eq:22} by the standard analysis of the induced character.
\end{proof}

When $g$ in \cref{Prop:nutwo-gen} has order two, the twisted Frobenius-Schur indicators specialize, naturally, to the indicators twisted by a group automorphism that were singled out already in \cite{MR1078503}, and have become somewhat better known than the general ones:
\begin{Cor}\label{Cor:nutwo-spec}
  Let $H\leq G$ as above. Assume that $g\in G\setminus H$ satisfies $g^2=e$, put $S=\Stab_H(gH)$. For any character $\chi$ of $S$ we have
  \begin{equation}
    \label{eq:17}
    \nu_2(g,\chi)=\nu^\tau(\chi)
  \end{equation}
  with $\tau\colon S\to S$ given by $\tau(x)=g\adhit x$. Thus the Frobenius-Schur indicator of the object $\Obj g\chi$  is the $\tau$-twisted indicator of the group character $\chi$.

  If $\Stab_H(gH)\subset C_G(g)$, then simply $\nu_2(g,\chi)=\nu_2(\chi)$.

  If $g'=gu$ with $u\in N_G(H)$, $u^2=e$, $gu=ug$, and $\Stab_H(gH)\subset C_G(g)$, then $\Stab_H(g'H)=\Stab_H(gH)$ and $\nu_2(g',\chi)=\nu^\tau(\chi)$, where $\tau$ is conjugation by $u$.
\end{Cor}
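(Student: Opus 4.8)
The plan is to deduce all four assertions directly from the general formula \eqref{eq:19} of \cref{Prop:nutwo-gen}, which under the hypothesis $g^2=e$ reads $\nu_2(g,\chi)=\frac1{|S|}\sum_{x\in S}\chi\bigl(x(g\adhit x)\bigr)$. First I would record that, by \cref{Prop:nutwo-gen}(2), the map $\tau\colon S\to S$, $\tau(x)=g\adhit x$, is an automorphism of $S$, and that it is an involution because $g^2=e$. The right-hand side above is then exactly the defining expression for the $\tau$-twisted indicator $\nu^\tau(\chi)$ of \cite{MR1078503}, which proves \eqref{eq:17}. The second assertion is the special case $\tau=\id$: if $S\subseteq C_G(g)$ then $g\adhit x=x$ for every $x\in S$, each summand becomes $\chi(x^2)$, and the sum is by definition $\nu_2(\chi)$.

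For the last assertion, about $g'=gu$, I would first treat the stabilizer. Since $u\in N_G(H)$ we have $u\adhit H=H$, hence $g'\adhit H=g\adhit(u\adhit H)=g\adhit H$, and therefore $\Stab_H(g'H)=H\cap(g'\adhit H)=H\cap(g\adhit H)=\Stab_H(gH)=S$; note this step uses only $u\in N_G(H)$. Next, $(g')^2=gugu=g^2u^2=e$ by $gu=ug$ together with $g^2=u^2=e$, so \cref{Prop:nutwo-gen}, and in particular \eqref{eq:19}, applies to $g'$ with the same $S$. The crucial point is that conjugation by $u$ restricts to an automorphism $\tau$ of $S$: from $u\in N_G(H)$ and $ug=gu$ one gets $u\adhit S=(u\adhit H)\cap(u\adhit(g\adhit H))=H\cap((gu)\adhit H)=H\cap(g\adhit H)=S$, and $\tau^2=\id$ because $u^2=e$.

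Having this, I would finish by a short computation: applying \eqref{eq:19} to $g'$ and using $(g')^2=e$ gives $\nu_2(g',\chi)=\frac1{|S|}\sum_{x\in S}\chi\bigl(x(g'\adhit x)\bigr)$, and for $x\in S$ we have $g'\adhit x=g\adhit(u\adhit x)=g\adhit\tau(x)=\tau(x)$, the last equality because $\tau(x)\in S\subseteq C_G(g)$. Hence $\nu_2(g',\chi)=\frac1{|S|}\sum_{x\in S}\chi(x\tau(x))=\nu^\tau(\chi)$ with $\tau$ conjugation by $u$, as claimed. The only genuinely delicate point in the whole argument — the place where the somewhat technical hypotheses $u\in N_G(H)$, $gu=ug$ and $\Stab_H(gH)\subseteq C_G(g)$ all get used — is the verification that $\tau$ is a well-defined automorphism of $S$ and that the double conjugation $g'\adhit(\cdot)$ collapses to $\tau$ on $S$; everything else is bookkeeping on top of \cref{Prop:nutwo-gen}. (One should also keep $g'\notin H$, as holds in the intended applications, so that $\Obj{g'}\chi$ is indeed the simple object under consideration.)
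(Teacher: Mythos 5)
Your proof is correct and follows the route the paper intends: the corollary is stated there without proof as an immediate consequence of \cref{Prop:nutwo-gen}, and specializing \eqref{eq:19} to $g^2=e$, together with the stabilizer bookkeeping for $g'=gu$, is exactly that argument. The only non-routine content is the verification that conjugation by $u$ preserves $S$ and that $g'\adhit x=\tau(x)$ for $x\in S$ (using $S\subset C_G(g)$), and you handle both correctly.
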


We also note the following ``invariance'' of indicators with respect to elements commuting with $H$:
\begin{Rem}\label{rem:1}
  If $u\in G$ satisfies $H\subset C_G(u)$, then $S(u\adhit g)=S(g)$ and $\nu_m(u\adhit g,\chi)=\nu_m(g,\chi)$ for any $g\in G$, $\chi\in\Irr(S(g))$, and $m\in\Z$.

  If also $g\in C_G(u)$ and $u^m=e$, then $S(ug)=S(g)$ and $\nu_m(ug,\chi)=\nu_m(g,\chi)$.
\end{Rem}

\begin{Prop}
  \label{thm:1}
  Let $G$ be a finite group and $H\subgroup F\subgroup G$. Let $t\in G$ satisfy $t^2=e$, $F\cap t\adhit H\subset H$, and $H':=\Stab_H(tH)\subset C_G(t)$. Let $f\in F$ satisfy $f^2\in H$ and $ft=tf$.

  Then $\Stab_H(tfH)=\Stab_{H'}(fH')$, and
  \begin{equation*}
    \nu_2(tf,\chi,H)=\nu_2(f,\chi,H').
  \end{equation*}
\end{Prop}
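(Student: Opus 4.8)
The plan is to reduce everything to Proposition~\ref{Prop:nutwo-gen} applied to the element $g=tf$. Since $t^2=e$ and $tf=ft$, we have $g^2=(tf)^2=t^2f^2=f^2\in H$, so \eqref{eq:18} is available for $g$ with respect to $H$; by the same computation $t\adhit f^2=tf^2t^{-1}=(tf)^2=f^2$, so that $f^2\in H\cap t\adhit H=H'$ and \eqref{eq:18} is also available for $f$ with respect to $H'$. Thus both indicators in the statement have the ``twisted'' form $\frac1{|S|}\sum_{x\in S}\chi((gx)^2)$, and the proof splits into (a) showing the two stabilizer groups coincide, and (b) matching the two sums term by term.

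For (a), write $S:=\Stab_H(tfH)=H\cap(tf)\adhit H$. The first --- and, I expect, the only delicate --- step is the inclusion $S\subseteq H'$. If $x\in S$ then $(tf)^{-1}x(tf)\in H$, hence $f^{-1}(t^{-1}xt)f\in H$, hence $t\adhit x=t^{-1}xt\in f\adhit H\subseteq F$ (using $H\subseteq F$ and $f\in F$); since also $x\in H$ gives $t\adhit x\in t\adhit H$, the hypothesis $F\cap t\adhit H\subseteq H$ yields $t\adhit x\in H$, i.e.\ $x\in t^{-1}\adhit H=t\adhit H$, and therefore $x\in H\cap t\adhit H=H'$. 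Now restrict to $x\in H'$: since $H'\subseteq C_G(t)$ we have $t\adhit x=x$, so $(tf)^{-1}x(tf)=f^{-1}\adhit x$, and hence for $x\in H'$ the condition $x\in S$ is equivalent to $f^{-1}\adhit x\in H$. On the other hand $x\in\Stab_{H'}(fH')$ is equivalent to $f^{-1}\adhit x\in H'$. These two conditions coincide: using $tf=ft$ one checks $t\adhit(f^{-1}\adhit x)=f^{-1}\adhit(t\adhit x)=f^{-1}\adhit x$, so $f^{-1}\adhit x$ is $t$-fixed, whence $f^{-1}\adhit x\in H$ forces $f^{-1}\adhit x\in H\cap t\adhit H=H'$. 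Thus $\Stab_H(tfH)=\Stab_{H'}(fH')=:S$.

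For (b), apply \eqref{eq:18} on each side. The left-hand side gives $\nu_2(tf,\chi,H)=\frac1{|S|}\sum_{x\in S}\chi((tfx)^2)$, and the right-hand side (with $H'$ in place of $H$, $f$ in place of $g$, and stabilizer $\Stab_{H'}(fH')=S$ by part~(a), so the normalisations agree) gives $\nu_2(f,\chi,H')=\frac1{|S|}\sum_{x\in S}\chi((fx)^2)$. For $x\in S\subseteq H'\subseteq C_G(t)$ the element $t$ commutes with $x$ and with $f$, hence with $fx$, so $(tfx)^2=t^2(fx)^2=(fx)^2$. The two sums therefore agree term by term, which gives $\nu_2(tf,\chi,H)=\nu_2(f,\chi,H')$.

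The only genuine obstacle is the inclusion $\Stab_H(tfH)\subseteq H'$ in part~(a), which is exactly where the hypotheses $H\subseteq F$ and $F\cap t\adhit H\subseteq H$ are used; everything after that is bookkeeping with $t^2=e$, $tf=ft$ and $H'\subseteq C_G(t)$. No separate treatment of the degenerate case $tf\in H$ is needed, since the argument above never assumes $tf\notin H$; one may note that $tf\in H$ forces $t\in F$, hence $t\adhit H=H$ and $H'=H\subseteq C_G(t)$, so the computation is unaffected.
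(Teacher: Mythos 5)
Your proposal is correct and follows essentially the same route as the paper: establish $\Stab_H(tfH)=\Stab_{H'}(fH')$ using the hypothesis $F\cap t\adhit H\subset H$ for the hard inclusion, then compare the sums from \eqref{eq:18} term by term via $(tfx)^2=(fx)^2$ for $x\in S\subset C_G(t)$. Your reorganization of the stabilizer argument (first $S\subseteq H'$, then matching the two membership conditions inside $H'$) is only a cosmetic variant of the paper's two inclusions, so nothing essentially new or missing.
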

\begin{proof}
  If $x\in H'\cap g\adhit H'$, then $x\in H\cap t\adhit H$, and $x\in f\adhit H\cap ft\adhit H$, in particular $x\in H\cap ft\adhit H$. Conversely, if $x\in H\cap tf\adhit H$, then $t\adhit x\in f\adhit H\subset F$, and thus $t\adhit x\in H$ by assumption. Thus $x\in H'$. Moreover, $f\inv\adhit x\in F$ and $tf\inv\adhit x\in H$ implies $f\adhit x\in F\cap t\adhit H\subset H$; already $tf\inv x\in\adhit H$ and so $x\in H'\cap f\adhit H'$. We have shown $S:=\Stab_H(tfH)=\Stab_{H'}(fH')$. For the indicators it only remains to observe that
  \begin{equation*}
    \nu_2(tf,\chi,H)=\frac 1{|S|}\sum_{x\in S}\chi((tfx)^2)
    =\frac 1{|S|}\sum_{x\in S}\chi((fx)^2)=\nu_2(f,\chi,H').
  \end{equation*}
\end{proof}

\section{Examples}
\label{sec:examples}

A particular case of the following example (with $n=p=7$, $\ell=5$, and $g=(56)$) occurs in figure 3 (and figure 2) in \cite{Sch:CHFSIFCCIFG}; the conspicuous block of zeroes was pointed out to me by Joe Timmer.
\begin{Expl}\label{ex:1}
  Let $\S_\ell\subset \S_n$ be the subgroup of the permutation group on $n$ letters that keeps $n-\ell$ letters (say, the last $n-\ell$) fixed. Let $p$ be a prime such that $\ell+p>n$. Let $g\in \S_n\setminus \S_\ell$ such that $|\supp(g)\cap \{\ell+1,\dots,n\}|<p-\ell$. Then $\nu_p(g,\chi)=0$ for all $\chi\in\Irr(S(g))$.
\end{Expl}
\begin{proof}
  Otherwise by \cref{rem:2} the double coset of $g$ would have to contain an element $g'$ whose $p$-th power is the identity.  By the conditions on $p$, $g'$ is a $p$-cycle. But since $g$ moves less than $p-\ell$ points greater than $\ell$, the same is true for $g'$ in the $\S_\ell$-double coset of $g$. Therefore, $g'$ moves less than $p$ points, a contradiction.
\end{proof}

The following result generalizes \cite[Thm.~5.1]{2014arXiv1412.4725}:
\begin{Thm}\label{Thm:SellinSn}
  Let $\S_\ell\subset \S_n$ in the natural way. Consider $g\in \S_n$. Then either $\nu_2(g,\chi)=0$ for all $\chi$, or $\nu_2(g,\chi)=1$ for all $\chi$.
\end{Thm}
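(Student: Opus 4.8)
The plan is to dispose of the ``all zero'' alternative with \cref{rem:2}, and to reduce the remaining case, by a judicious choice of representative of the double coset $\S_\ell g\S_\ell$, to the ordinary Frobenius--Schur indicator of an irreducible character of a symmetric group, which is well known to be $1$ (all Specht modules are realizable over $\Q$). Throughout I write $\S_X\leq\S_n$ for the pointwise stabilizer of the complement of a subset $X\subseteq\{1,\dots,n\}$, so that $\S_\ell=\S_A$ with $A=\{1,\dots,\ell\}$, and I use that $\nu_2(g,\chi)$ depends only on the double coset of $g$ (with the corresponding identification of characters) and that, directly from \eqref{eq:24}, replacing $g$ by $g\delta$ with $\delta\in\S_\ell$ does not change the indicator (substitute $y=\delta x$).

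If no representative $g'$ of $\S_\ell g\S_\ell$ has $(g')^2\in\S_\ell$, then, since $S(g')\subseteq\S_\ell$, \cref{rem:2} with $m=2$ forces $\nu_2(g,\chi)=0$ for every $\chi$, and we are in the first alternative. So assume $g^2\in\S_\ell$. If $g\in\S_\ell$, then $\S_\ell g\S_\ell$ is the double coset of $e$, $S(g)=\S_\ell$, and \eqref{eq:24} gives $\nu_2(g,\chi)=\nu_2(\chi)=1$; hence we may further assume $g\notin\S_\ell$.

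Now put $C=A\cap g(A)$. Two structural facts carry the argument. First, $S:=S(g)=\S_\ell\cap g\adhit\S_\ell=\S_A\cap\S_{g(A)}=\S_C$, so $S$ is itself a symmetric group. Second, $g^2\in\S_\ell$ means $g^2(A)=A$, so $g(C)=g(A)\cap g^2(A)=g(A)\cap A=C$, i.e.\ $g$ permutes $C$; let $\sigma\in\S_C\leq\S_n$ be the permutation agreeing with $g$ on $C$. One then checks directly that $h:=g\sigma^{-1}$ fixes $C$ pointwise, so $h$ has support disjoint from $C$ and therefore centralizes $S=\S_C$. Since $\sigma^{-1}\in\S_\ell$ we have $h\S_\ell=g\S_\ell$, so $h\notin\S_\ell$ and $\Stab_{\S_\ell}(h\S_\ell)=S$; and $h$ lies in the nontrivial coset of the index-two subgroup $S$ of $\hat S=S\sqcup gS$ (\cref{Prop:nutwo-gen}), so $h^2\in S=\S_C$, which together with $h$ fixing $C$ pointwise forces $h^2=e$.

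It remains to compute. Since $h=g\sigma^{-1}$ with $\sigma^{-1}\in\S_\ell$ we have $\nu_2(g,\chi)=\nu_2(h,\chi)$, and as $h\notin\S_\ell$ with $h^2=e\in\S_\ell$ and stabilizer $S$, \eqref{eq:18} gives $\nu_2(h,\chi)=\frac1{|S|}\sum_{x\in S}\chi((hx)^2)$; since $h$ commutes with each $x\in S$ this equals $\frac1{|S|}\sum_{x\in S}\chi(x^2)=\nu_2(\chi)=1$, using once more that $S=\S_C$ is a symmetric group. Thus $\nu_2(g,\chi)=1$ for all $\chi$, the second alternative. (One may equally invoke \cref{Cor:nutwo-spec} for $h$ in place of the last computation, since $S(h)=S\subseteq C_{\S_n}(h)$.) The step requiring real care is the support bookkeeping of the previous paragraph --- recognizing $S$ as $\S_C$, that $g$ stabilizes $C$, and that $h$ has order two and centralizes $S$; the rest is routine.
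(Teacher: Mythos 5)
Your proof is correct and is essentially the paper's argument: both reduce to a double-coset representative $h$ with $h^2=e$ that centralizes the stabilizer $S$, identify $S$ as a symmetric group on the letters of $\{1,\dots,\ell\}$ untouched by $h$, and conclude $\nu_2(g,\chi)=\nu_2(\chi)=1$ from total orthogonality of symmetric groups (with the all-zero alternative disposed of via \cref{rem:2}, which the paper leaves implicit in ``we can assume $g^2\in\S_\ell$''). The only difference is in presentation: the paper extracts $h$ by analyzing the cycle decomposition of $g$ (letters beyond $\ell$ occur only in transpositions, and one strips the cycles lying in $\S_\ell$), whereas you obtain the same factorization $g=h\sigma$ structurally from $C=A\cap g(A)$; both routes are sound.
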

\begin{proof}
  We can assume $g^2\in \S_\ell$. In the cycle decomposition of $g$, if $i\in\{1,\dots,n\}$ occurs in a cycle of length not two, then $i$ is not fixed by $g^2$. Thus letters beyond $\ell$ only occur in transpositions in the cycle decomposition of $g$. Without changing the double coset of $g$, we can strip it of any cycles in $\S_\ell$. So finally $g$ is a product of transpositions, each of which contains at least one element greater than $\ell$. As noted in \cite{Sch:CHFSIFCCIFG}, the stabilizer of $g$ is then the symmetric group on those letters in $\{1,\dots,\ell\}$ that are not moved by $g$. In particular $\Stab_{\S_\ell}(g\S_{\ell})\subset C_{\S_n}(g)$ and thus $\nu_2(g,\chi)=\nu_2(\chi)=1$.
\end{proof}
\begin{Rem}\label{rem:3}
  The statement of \cref{Thm:SellinSn} is that among the double cosets of $\S_\ell$ in $\S_n$ there are some where all indicators of the associated simples in $\C(\S_n,\S_\ell)$ are zero (following Timmer \cite{2014arXiv1412.4725} we might call them \emph{null indicator double cosets}, or their elements \emph{null indicator elements}), and others where all the indicators equal one. It takes only slightly more work to decide, for an arbitrary element $\sigma\in \S_n$, whether it belongs to a null indicator double coset or not. First, using the observations
  \begin{align*}
    (i,a_1,\dots,a_r,j,b_1,\dots,b_s)(ij)&=(i,b_1,\dots,b_s)(j,a_1,\dots,a_r)\\
    (i,j,a_1,\dots,a_r)(ij)&=(i,a_1,\dots,a_r)
  \end{align*}
  for $i,j\in\{1,\dots,\ell\}$ we can multiply $g$ repeatedly from the right by transpositions in $\S_\ell$ to obtain a representative with the property that each of the cycles in its disjoint cycle decomposition contains at most one element from $\{1,\dots,\ell\}$. Let now $\sigma$ and $\sigma'$ be two permutatins with this property, and $s,t\in \S_\ell$ such that $s\sigma t\inv=\sigma'$. Let $i\leq\ell$ such that $\sigma(i)>\ell$, and let $\lambda$ be the length of the $\sigma$-orbit of $i$. Then $j=t(i)$ has $\sigma'(j)=s\sigma t\inv(j)>\ell$. On elements beyond $\ell$ the two permutations $\sigma$ and $\sigma'$ act in the same way, so that the $\sigma'$-orbit of $j$ contains $j,\sigma(i),\sigma^2(i),\dots,\sigma^{\lambda-1}(i)$. Now $\sigma'\sigma^{\lambda-1}(i)=s\sigma^\lambda(i)=s(i)$ also belongs to the $\sigma'$-orbit of $j$, and $s(i)\leq\ell$, and so $s(i)=j$ by the assumption on $\sigma'$. On the other hand, if $i\leq\ell$ is not moved by $\sigma$, then $j=t(i)$ is mapped to $s(i)$ by $\sigma'$, and since $\sigma'$ cannot move $j\leq\ell$ to a different element in $\{1,\dots,\ell\}$, we see $s(i)=j$. All in all $s=t$.

  We have shown that every $\S_\ell$-double coset in $\S_n$ contains an element such that every cycle of its cycle decomposition contains at most one element from $\{1,\dots,\ell\}$, and that this representative is unique up to conjugation by an element in $\S_\ell$ (that is, renumbering of the elements in $\{1,\dots,\ell\}$ that occur.

  Now it so happens that the elements found in the proof of \cref{Thm:SellinSn} to represent the non-null indicator double cosets are among the representatives we have now found for all the double cosets. Thus $\sigma$ is in a null indicator double coset if and only if its (easily computable) representative is not a product of disjoint transpositions.

  It may also be worth noting that, provided $2\ell\geq n$, the number of double cosets of $\S_\ell$ in $\S_n$, as well as the number of null indicator double cosets among them, only depends on the difference $n-\ell$.

  For $\S_{n-2}\subset \S_n$ we have already listed the seven double cosets in \cite{Sch:CHFSIFCCIFG}; two of them are null indicator double cosets.

  For $\S_{n-3}\subset \S_n$ it suffices to consider $\S_3\subset \S_6$. We list the coset representatives (with $\bullet$ representing elements in $\{1,\dots,3\}$):
  \begin{itemize}
  \item The permutations from $\S_6$ fixing $1,2,3$ represent six double cosets, among which the identity and the three transpositions represent the non-null indicator double cosets.
  \item $(\bullet,i)$ and $(\bullet,i)(j,k)$ with $i\in\{4,5,6\}$ and $\{i,j,k\}=\{4,5,6\}$ represent six non-null indicator double cosets.
  \item $(\bullet,i)(\bullet,j)$ with $i,j\in\{4,5,6\}$ and $i\neq j$ represent three non-null indicator double cosets.
  \item $(\bullet,4)(\bullet,5)(\bullet,6)$ represent(s) one non-null indicator double coset.
  \item $(\bullet,i,j)$ with $i,j\in\{4,5,6\}$, $i\neq j$, represent six null indicator double cosets.
  \item $(\bullet,i)(\bullet,j,k)$ with $\{i,j,k\}=\{4,5,6\}$ represent six null indicator double cosets.
  \item $(\bullet,i,j,k)$ with $\{i,j,k\}=\{4,5,6\}$ represent six null indicator double cosets.
  \end{itemize}
  Thus, in total, we get 34 double cosets, among which are 20 null indicator double cosets.

  For $\S_{n-4}\subset \S_n$, say $n=8$, we will be even more telegrammatic, listing almost only cycle shapes with $\bullet$ indicating elements in $\{1,\dots,4\}$ and $\times$ elements in $\{5,\dots,8\}$.
  We begin by listing the representatives of non-null indicator double cosets:
  \begin{itemize}
  \item The ten elements of order one or two fixing $1,\dots,4$.
  \item $(\bullet,\times)\sigma$, where $\sigma$ is among the four elements of square one that fix $1,\dots,4$ and one more element; this gives a total of $16$ double cosets.
  \item $(\bullet,\times)(\bullet,\times)$ represent six double cosets.
  \item $(\bullet,\times)(\bullet,\times)(\times,\times)$ represents six.
  \item $(\bullet,\times)(\bullet,\times)(\bullet,\times)$ represents four.
  \item $(\bullet,\times)(\bullet,\times)(\bullet,\times)(\bullet,\times)$ represents one.
  \end{itemize}
  This exhausts the $43$ non-null indicator double cosets. For the null indicator ones we list
  \begin{itemize}
  \item The $14$ elements fixing $1,\dots,4$ whose square is not one,
  \item $(\bullet,\times)(\times,\times,\times)$ for $8$ double cosets,
  \item $(\bullet,\times,\times)$ for $12$ double cosets,
  \item $(\bullet,\times,\times)(\times,\times)$ for $12$,
  \item $(\bullet,\times,\times)(\bullet,\times)$ for $24$,
  \item $(\bullet,\times,\times)(\bullet,\times,\times)$ for $12$,
  \item $(\bullet,\times,\times,\times)$ for $24$,
  \item $(\bullet,\times,\times,\times)(\bullet,\times)$ for $24$, and
  \item $(\bullet,\times,\times,\times,\times)$ for $24$.
  \end{itemize}
  These sum up to $154$ null indicator and a total of $197$ double cosets.

  It is perhaps not entirely unreasonable to conjecture that with increasing $k$ (and $n$ large enough, that is $n\geq 2k$), the proportion of null indicator double cosets among all double cosets might tend to $1$.
\end{Rem}

The following observation is likely known:
\begin{Lem}\label{thm:2}
  Let $\sigma\in\S_n\setminus\A_n$. The twisted indicator $\nu_2^\sigma(\chi)$ of an irreducible character $\chi$ of $\A_n$ is $0$ or $1$.
\end{Lem}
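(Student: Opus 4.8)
The plan is to apply \cref{Prop:nutwo-gen} with $G=\S_n$, $H=\A_n$, and $g=\sigma$. Because $\A_n$ is normal of index two, $\sigma^2\in\A_n$ holds automatically, the stabilizer $S=\Stab_{\A_n}(\sigma\A_n)$ equals $\A_n$, and the group $\hat S=S\sqcup\sigma S$ of that proposition is all of $\S_n$. Thus $\nu_2^\sigma(\chi)$ is precisely the twisted indicator computed there, and \eqref{eq:22} gives
\[
  \nu_2^\sigma(\chi)=\nu_2\bigl(\Ind_{\A_n}^{\S_n}\chi\bigr)-\nu_2(\chi),
\]
while \eqref{eq:23} refines this according to whether or not $\sigma\adhit\chi=\chi$. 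So everything reduces to controlling the two ordinary indicators on the right.

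The first ingredient is the classical fact that every irreducible representation of $\S_n$ can be realized over $\Q$, hence over $\R$, so $\nu_2(\psi)=1$ for every $\psi\in\Irr(\S_n)$. Since $\nu_2$ is additive over direct sums and $[\S_n:\A_n]=2$, the induced character $\Ind_{\A_n}^{\S_n}\chi$ is either irreducible --- which happens exactly when $\chi$ is not $\S_n$-stable, i.e.\ $\sigma\adhit\chi\neq\chi$ --- with $\nu_2(\Ind_{\A_n}^{\S_n}\chi)=1$, or a sum of the two extensions of $\chi$ --- when $\sigma\adhit\chi=\chi$ --- with $\nu_2(\Ind_{\A_n}^{\S_n}\chi)=2$. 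Hence it remains only to show $\nu_2(\chi)=1$ in the stable case and $\nu_2(\chi)\in\{0,1\}$ in the non-stable case; then $\nu_2^\sigma(\chi)$ equals $2-1=1$ or $1-\nu_2(\chi)\in\{0,1\}$ respectively.

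In the stable case I would pick an extension $\hat\chi\in\Irr(\S_n)$ of $\chi$; it is realizable over $\R$, and restricting a real form to $\A_n$ --- where it remains irreducible and affords $\chi$ --- exhibits $\chi$ as realizable over $\R$, so $\nu_2(\chi)=1$. In the non-stable case, with $\hat\chi:=\Ind_{\A_n}^{\S_n}\chi$ irreducible and $\Res_{\A_n}\hat\chi=\chi+\sigma\adhit\chi$ a sum of two distinct irreducibles, I would rule out $\nu_2(\chi)=-1$: were $\chi$ of quaternionic type, restricting a real form of $\hat\chi$ to $\A_n$ would produce a real $\A_n$-module whose complexification contains $\chi$ with multiplicity one, contradicting the standard fact that a complex irreducible module of quaternionic type occurs with even multiplicity in the complexification of any real representation. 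Thus $\nu_2(\chi)\in\{0,1\}$.

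The reduction via \cref{Prop:nutwo-gen} and the rationality of $\S_n$-representations are routine; the only delicate point is the real-representation-type bookkeeping that excludes $\nu_2(\chi)=-1$ in the non-stable case, which one could instead dispatch by citing that the alternating groups have no irreducible characters of quaternionic type.
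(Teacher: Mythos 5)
Your proposal is correct and follows essentially the same route as the paper: the paper's one-line proof is precisely the identity $\nu_2^\sigma(\chi)=\nu_2\bigl(\Ind_{\A_n}^{\S_n}\chi\bigr)-\nu_2(\chi)$ (i.e.\ \eqref{eq:22} with $S=\A_n$, $\hat S=\S_n$) combined with total orthogonality of $\S_n$. Your additional bookkeeping on the two Clifford-theory cases and on ruling out quaternionic type for $\chi$ merely makes explicit what the paper leaves implicit (it could also be dispatched by citing that twisted indicators lie in $\{0,\pm1\}$ or that $\A_n$ has no characters of indicator $-1$).
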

\begin{proof}
  $\nu_2^\sigma(\chi)=\nu_2\left(\chi\upharpoonright{\S_n}\right)-\nu_2(\chi)$, and $\S_n$ is totally orthogonal.
\end{proof}
The following was conjectured, with a near-complete proof, in \cite[Conj.5.6]{2014arXiv1412.4725}:
\begin{Thm}\label{thm:4}
  All simple objects of $\Cat{\S_n}{\A_n}$ have Frobenius-Schur indicator $0$ or $1$.
\end{Thm}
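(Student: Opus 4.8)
The plan is to exploit that $\A_n$ is normal of index two in $\S_n$, which makes the whole computation nearly automatic. First I would record that, by normality, the double coset space $\A_n\backslash\S_n/\A_n$ consists of just the two cosets $\A_n$ and $\S_n\setminus\A_n$, with representatives $e$ and the transposition $t=(1\,2)$, and that $S(g)=\Stab_{\A_n}(g\A_n)=\A_n$ for every $g\in\S_n$ (since conjugation by any $g$ preserves $\A_n$). Hence the simple objects of $\Cat{\S_n}{\A_n}$ are precisely the $\Obj e\chi$ and the $\Obj t\chi$ with $\chi\in\Irr(\A_n)$, so there are only two families of indicator values to control, and both involve only ordinary and twisted Frobenius--Schur indicators of the irreducible characters of $\A_n$.

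For the objects $\Obj t\chi$ the bound is immediate: since $t^2=e$ and $\Stab_{\A_n}(t\A_n)=\A_n$, \cref{Cor:nutwo-spec} gives $\nu_2(t,\chi)=\nu^\tau(\chi)$ with $\tau$ the (order-two) automorphism of $\A_n$ given by conjugation by $t$, and this lies in $\{0,1\}$ by \cref{thm:2}. For the objects $\Obj e\chi$, formula \eqref{eq:24} gives $\nu_2(e,\chi)=\nu_2(\chi)$, the ordinary indicator of $\chi\in\Irr(\A_n)$. To bound this I would invoke \eqref{eq:22} with $g=t$ and $\hat S=\S_n$, which rearranges to $\nu_2(e,\chi)=\nu_2(\chi)=\nu_2\bigl(\Ind_{\A_n}^{\S_n}\chi\bigr)-\nu^\tau(\chi)$. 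Here $\Ind_{\A_n}^{\S_n}\chi$ is an honest character of $\S_n$, a group which is totally orthogonal, so $\nu_2\bigl(\Ind_{\A_n}^{\S_n}\chi\bigr)$ equals the number of irreducible constituents of $\Ind_{\A_n}^{\S_n}\chi$ --- that is, $1$ or $2$ by Clifford theory. Since $\nu^\tau(\chi)\in\{0,1\}$ by \cref{thm:2} and $\nu_2(\chi)\leq 1$ (the indicator of an irreducible character is $-1$, $0$ or $1$), the identity forces $\nu_2(\chi)\in\{0,1\}$, completing the proof.

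The one place where genuine content enters is thus the behavior of $\nu_2(\chi)$ for $\chi\in\Irr(\A_n)$, i.e. the fact that $\A_n$ is totally orthogonal; in the argument above it falls out of the index-two structure together with the total orthogonality of $\S_n$, but I expect this to be the step a reader will scrutinize, since a priori it looks as though it should require real information about the representation theory of $\A_n$. For emphasis I would also indicate the direct reason: an irreducible $\CC[\A_n]$-module is either the restriction of an irreducible $\CC[\S_n]$-module $V_\lambda$ with $\lambda\neq\lambda'$, or one of the two non-isomorphic summands $V^+,V^-$ of the restriction of $V_\lambda$ for a self-conjugate $\lambda$; restricting a nonzero $\S_n$-invariant symmetric bilinear form on $V_\lambda$ then yields indicator $1$ in the first case, and in the second case yields indicator $1$ when the restricted form is nonzero on $V^+$ and indicator $0$ otherwise (when nondegeneracy pairs $V^+$ perfectly with $V^-$, so that $V^+$ is not self-dual) --- in no case $-1$. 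Either route closes the argument, the first being shorter and self-contained given the material already developed.
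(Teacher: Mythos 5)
Your proof is correct and follows essentially the same route as the paper: the two double cosets $\A_n$ and $(12)\A_n$, with the nontrivial one handled by \cref{Cor:nutwo-spec} and \cref{thm:2}. The only difference is at the trivial coset, where the paper simply invokes the known fact (cited there from the literature) that no alternating group has an irreducible character of indicator $-1$, whereas you rederive it from \eqref{eq:22} and the total orthogonality of $\S_n$ --- noting that this particular derivation leans on \cref{thm:2}, whose own proof implicitly uses the same fact, so the genuinely self-contained justification is the Clifford-theoretic argument in your final paragraph.
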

\begin{proof}
  Two suitable double coset representatives are $e$ and $(12)$. Since $\nu_2(e,\chi)=\nu_2(\chi)$, only the case $g=(12)$ is interesting. Clearly $S:=\Stab_{\A_n}((12)\A_n)=\A_n$. Thus $\nu_2((12),\chi)$ is zero or one by \cref{thm:2}.
\end{proof}

More generally, the same result holds for smaller alternating groups embedded in the canonical way in $\S_n$.
\begin{Thm}\label{thm:3}
  All simple objects of $\Cat{\S_n}{\A_\ell}$ for $\ell<n$ have Frobenius-Schur indicator $0$ or $1$.
\end{Thm}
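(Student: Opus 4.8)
The plan is to reduce \cref{thm:3} to \cref{thm:4} (the case $\ell=n$) by splitting a double coset representative into a part supported on the ``large'' letters and a part lying in $\S_\ell$, while retaining the ambient symmetric group via \cref{thm:1}. By \cref{Prop:nutwo-gen} it is enough to bound $\nu_2(g,\chi)$ for simple objects $\Obj g\chi$ with $g^2\in\A_\ell$, since every simple with nonzero indicator is, up to isomorphism, of this form. Fix such a $g$. Because $g^2\in\A_\ell\subseteq\S_\ell$ fixes every letter $>\ell$, any cycle of $g$ that involves a letter $>\ell$ has length dividing $2$; hence $g=hc=ch$, where $h$ is the product of the nontrivial cycles of $g$ supported inside $\{1,\dots,\ell\}$, and $c$ is the product of the remaining nontrivial cycles, each of which is a transposition involving a letter $>\ell$. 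In particular $c^2=e$, $h\in\S_\ell$, and $h$ and $c$ have disjoint supports, so they commute.

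Next I would set $T:=\{i\le\ell\mid c(i)=i\}$ and record two elementary facts: $\supp(h)\subseteq T$, and $\{1,\dots,\ell\}\cap c(\{1,\dots,\ell\})=T$ --- the latter because a point $i\le\ell$ moved by $c$ lies in a transposition of $c$ whose partner exceeds $\ell$, so $c\inv(i)>\ell$. Writing $\A_T\subseteq\S_n$ for the alternating group on the set $T$, these facts give $\S_\ell\cap c\adhit\A_\ell=\A_T=\A_\ell\cap c\adhit\A_\ell=\Stab_{\A_\ell}(c\A_\ell)$, and also $\A_T\subseteq C_{\S_n}(c)$ since $\A_T$ permutes only points fixed by $c$. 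Hence the hypotheses of \cref{thm:1} are met with $G=\S_n$, $F=\S_\ell$, $H=\A_\ell$, $t=c$ (using $c^2=e$, $\S_\ell\cap c\adhit\A_\ell=\A_T\subseteq\A_\ell$, and $\Stab_{\A_\ell}(c\A_\ell)=\A_T\subseteq C_{\S_n}(c)$), and $f=h$ (using $h\in\S_\ell$, $h^2\in\A_\ell$, and $hc=ch$). That proposition yields
\[
  \nu_2(g,\chi,\A_\ell)=\nu_2(ch,\chi,\A_\ell)=\nu_2(h,\chi,\A_T),\qquad\chi\in\Irr\bigl(\Stab_{\A_T}(h\A_T)\bigr).
\]

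To conclude, observe that $h$ is supported in $T$, so $\Obj h\chi$ is a simple object of $\Cat{\S_T}{\A_T}$, and since the ambient group does not affect the value of \eqref{eq:24}, the right-hand side above is exactly its Frobenius-Schur indicator. Under the identifications $\S_T\cong\S_{|T|}$ and $\A_T\cong\A_{|T|}$, \cref{thm:4} with $n$ replaced by $|T|$ shows $\nu_2(h,\chi,\A_T)\in\{0,1\}$, and therefore $\nu_2(g,\chi)\in\{0,1\}$.

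Apart from the routine support bookkeeping, the one point I expect to matter is recognizing that one must \emph{not} simply absorb $h$ into $\A_\ell$, even when $h$ is an even permutation: doing so would reduce the question to ``$\nu_2(\chi)$ for $\chi\in\Irr(\A_{|T|})$'', which need not be $0$ or $1$ because alternating groups are not in general totally orthogonal. Keeping the symmetric group $\S_{|T|}$ in the picture --- which is precisely what \cref{thm:1} accomplishes --- is what makes \cref{thm:4} applicable and the statement true.
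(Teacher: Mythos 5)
Your proof is correct, but it follows a genuinely different route from the paper's. The paper normalizes the double coset representative: multiplying $g$ by elements of $\A_\ell$ it arranges that $g$ is a product of transpositions, at most one of which (taken to be $(12)$) lies in $\S_\ell$, identifies the stabilizer as an alternating group on the small letters not paired with letters beyond $\ell$, and then finishes in two cases — if no internal transposition occurs, $S\subset C_{\S_n}(g)$ gives $\nu_2(g,\chi)=\nu_2(\chi)\in\{0,1\}$, and otherwise $\nu_2(g,\chi)$ is the indicator twisted by conjugation with $(12)$, which is $0$ or $1$ by \cref{thm:2}. You instead keep $g$ as it stands, write it as the commuting product of its part $h$ inside $\S_\ell$ and the involution $c$ formed by the transpositions meeting $\{\ell+1,\dots,n\}$, check the hypotheses of \cref{thm:1} for $t=c$, $f=h$ (including $h^2=g^2\in\A_\ell$, $\S_\ell\cap c\adhit\A_\ell=\A_T=\Stab_{\A_\ell}(c\A_\ell)\subset C_{\S_n}(c)$, all of which are verified correctly), and so reduce to a simple of $\Cat{\S_{|T|}}{\A_{|T|}}$, i.e.\ to \cref{thm:4}; the observation that the ambient group does not enter \eqref{eq:24} makes this legitimate. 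This is precisely the reduction mechanism the paper deploys only later, for \cref{thm:7,thm:8}, applied one theorem earlier: it buys a case-free argument with no normalization of the representative (the possibly long cycles of $g$ inside $\S_\ell$ are simply carried along in $h$), at the price of the support bookkeeping needed for the hypotheses of \cref{thm:1}. The character-theoretic input is ultimately the same in both arguments, namely \cref{thm:2} and the absence of indicator $-1$ for alternating groups, which in your version are hidden inside \cref{thm:4}.
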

\begin{proof}
  It suffices to consider $\Obj g\chi$ with $g\in \S_n\setminus \A_\ell$ and $g^2\in \A_\ell$. As in the proof of \cref{Thm:SellinSn}, we see that elements beyond $\ell$ can only occur in transpositions in the cycle decomposition of $g$, and multiplying $g$ by an element of $\A_\ell$ we can achieve that $g$ is a product of transpositions, at most one of which is in $\S_\ell$. For $x\in \A_\ell$ to be in $g\adhit \A_\ell$ means $g\inv\adhit x$ does not move letters beyond $\ell$, thus that $x$ does not move one of the letters paired with an element beyond $\ell$ in a transposition of $x$. Write $\S_k'$ for the subgroup of $\S_n$ that keeps $1,\dots,k$ fixed. Without loss of generality we can assume $S=\Stab_{\A_\ell}(g\A_\ell)=\A_k$ for $k\leq\ell$, and either $g\in \S_k'$ or $k\geq 2$ and $g$ is the product of $(12)$ and a product of transpositions in $\S_k'$.

  If $g\in \S_k'$, then $\Stab_{\A_\ell}(g\A_\ell)\subset C_{\S_n}(g)$ and $\nu_2(g,\chi)=\nu_2(\chi)\in\{0,1\}$.

  If $g=(12)g'$ with $g'\in \S_k'$, then $\nu_2(g,\chi)=\nu_2^\tau(\chi)\in\{0,1\}$, with the automorphism $\tau$ of $\A_\ell$ given by conjugation with $(12)$.
\end{proof}

The following was proved in \cite{MR2471448} in the case where $n$ is prime; we make no attempt at reproducing the exact count of simples with the respective indicators, nor the asymptotics of the proportions of the two indicator values among the simples for large primes $n$ analyzed there.
\begin{Thm}\label{thm:5}
  If $n$ is not divisible by $4$, then every simple object of $\Cat{\S_n}{C_n}$ has Frobenius-Schur indicator $0$ or $1$, where we consider $C_n$, the cyclic group of order $n$ generated by an $n$-cycle in $\S_n$.
\end{Thm}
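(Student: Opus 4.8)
The plan is to combine \cref{Prop:nutwo-gen} and \cref{Cor:nutwo-spec} with the special feature of $C_n$, namely that every subgroup of $C_n$ is cyclic, hence abelian. By \cref{Prop:nutwo-gen} an object of $\Cat{\S_n}{C_n}$ with nonzero indicator can be represented in the form $\Obj g\chi$ with $g^2\in C_n$ and $\ord(g)$ a power of two; objects not of this form have indicator $0$, so it suffices to treat this case. The crucial point is to show that the hypothesis $4\nmid n$ forces $g^2=e$.

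So suppose $\ord(g)=2^k$ and $g^2\in C_n\setminus\{e\}$. Then $g^2$ has order $2^{k-1}\geq 2$, and this divides $n=|C_n|$; since $4\nmid n$ we must have $k=2$ and $\ord(g^2)=2$. As $C_n$ is cyclic it has a unique involution; writing $C_n=\langle c\rangle$ with $c$ an $n$-cycle, this involution is $c^{n/2}$, and because $n/2$ is odd the disjoint cycle decomposition of $c^{n/2}$ consists of $n/2$ transpositions, i.e.\ of an odd number of $2$-cycles (and no fixed points). But the square of any permutation has an even number of $2$-cycles in its cycle decomposition: $2$-cycles of a square can only arise from $4$-cycles of the original permutation, and each such $4$-cycle contributes exactly two of them. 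Hence $g^2$ cannot equal $c^{n/2}$, a contradiction; so $g^2=e$.

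Now \cref{Cor:nutwo-spec} applies and gives $\nu_2(g,\chi)=\nu^\tau(\chi)$, where $\tau$ is conjugation by $g$ on $S:=\Stab_{C_n}(gC_n)$. Since $S$ is a subgroup of $C_n$ it is cyclic, say $S=\langle s\rangle$ of order $d$, and $\tau(s)=s^r$ for some $r$ coprime to $d$. Let $\chi$ be an irreducible character of $S$ (necessarily linear), with $\chi(s)=\omega$ a $d$-th root of unity. As $g^2=e$ we have $(gx)^2=(g\adhit x)x=\tau(x)x=s^{(1+r)a}$ for $x=s^a$, so by \eqref{eq:18}
\begin{equation*}
  \nu_2(g,\chi)=\frac1d\sum_{a=0}^{d-1}\omega^{(1+r)a},
\end{equation*}
which equals $1$ if $\omega^{1+r}=1$ and $0$ otherwise. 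In particular $\nu_2(g,\chi)\in\{0,1\}$, and the theorem follows (the case $g=e$ being the instance $\tau=\id$).

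The only genuine obstacle is the middle step, namely ruling out $\ord(g)=4$ with $g^2\in C_n$; this is exactly where the arithmetic condition $4\nmid n$ is used, via the parity of the number of transpositions making up the unique involution of $C_n$. Once $g^2=e$ has been secured, the remainder is a routine character computation in a cyclic group, and no information about which involutions $g$ actually occur, or about the precise stabilizers $S$, is needed.
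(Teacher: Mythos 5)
Your proof is correct and takes essentially the same route as the paper: reduce via \cref{Prop:nutwo-gen} to $g$ of two-power order with $g^2\in C_n$, use $4\nmid n$ to force $g^2=e$, and then evaluate $\nu_2(g,\chi)$ as a geometric sum of roots of unity over the cyclic stabilizer. The only cosmetic difference is in excluding $\ord(g)=4$: the paper notes $g$ would be a fixed-point-free product of $4$-cycles (so $4\mid n$), while you count the parity of $2$-cycles in the square $c^{n/2}$ --- the same observation phrased differently.
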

\begin{proof}
  Consider $g\in \S_n\setminus C_n$ such that $g^2\in C_n$, and the order of $g$ is a power of two. Since $n$ is not divisible by four, the order of $g^2$
    can only be one or two. In the latter case the order of $g$ is four, and since $g^2\in C_n$, $g^2$ has no fixed points, and $g$ is a product of four-cycles without a fixed point. This contradicts the assumption that four does not divide $n$. Thus we have $g^2=()$. The stabilizer subgroup $S=\Stab_{C_n}(gC_n)$ is cyclic of order $m$ dividing $n$. Denote $t$ a generator of $S$. Then $g\adhit t=t^\ell$ for some $\ell$. Let the irreducible character $\chi$ of $S$ be given by $\chi(t)=\zeta$, where $\zeta$ is an $m$-th root of unity. Then
  \begin{equation*}
    \nu_2(g,\chi)=\frac1{|S|}\sum_{x\in S}\chi(x(g\adhit x))=\frac 1m\sum_{k=0}^{m-1}\chi(t^kt^{\ell k})=\frac 1m\sum_{k=0}^{m-1}(\zeta^{1+\ell})^k
  \end{equation*}
  is zero or one, since $\zeta^{1+\ell}$ is an $m$-th root of unity.
\end{proof}
The following example shows that if we omit the condition on $n$, the conclusion may fail. Note though that (according to GAP calculations) no indicator of a simple in $\Cat{\S_8}{C_8}$ is negative. We don't know a necessary and sufficient condition for the indicator value $-1 $ to occur.
\begin{Expl}\label{ex:2}
  Consider $t=(1\;2\;3\;4\;5\;6\;7\;8\;9\;10\;11\;12)$ and $H=C_{12}=\langle t\rangle\subset \S_{12}=G$. Put $g=(1\;2\;7\;8)(3\;11\;9\;5)(4\;12\;10\;6)$. We see that
  \begin{align*}
    g\inv\adhit t&=(1\;5\;6\;9\;10\;2\;7\;11\;12\;3\;4\;8)\not\in H\\
    g\inv\adhit t^2&=(1\;6\;10\;7\;12\;4)(2\;11\;3\;8\;5\;9)\not\in H\\
    g\inv\adhit t^3&=(1\;9\;7\;3)(2\;12\;8\;6)(3\;1\;9\;7)\not\in H\\
    g\adhit t^4&=(1\;10\;12)(2\;3\;5)(4\;6\;7)(8\;9\;11)\not\in H, \text{ but}\\
    t^6=g^2&=(1\;7)(2\;8)(3\;9)(4\;10)(5\;11)(6\;12),
  \end{align*}
  so that $S=H\cap g\adhit H$ is cyclic of order two, generated by $g^2$. For a character $\chi$ of $S$ we have
  \begin{equation*}
    \nu_2(g,\chi)=\frac12\sum_{x\in S}\chi(g^2x^2)=\chi(g^2)
  \end{equation*}
  which equals $-1$ if $\chi$ is the nontrivial irreducible character of $S$.
\end{Expl}

The following results settle \cite[Conj.5.3]{2014arXiv1412.4725}. More precisely, we prove it up to a finite set of exceptions. To understand this set, we note that the alternating group $\A_n$ is ambivalent if and only if $n\in\{1,2,5,6,10,14\}$, see \cite{MR0240187}. By \cite[Cor.~3.8]{MR963550}, this implies that those particular alternating groups are totally orthogonal, while we have already used that no alternating group admits a representation with indicator $-1$ (by \cite[Prop.~3.9]{MR963550}). 
\begin{Thm}\label{thm:6}
  For $n\geq 4$, the alternating group $\A_n$ contains an isomorphic copy of $\S_{n-2}$, namely $\tilde \S_{n-2}=\A_n\cap((12)\S_2'\sqcup \S_2')$.

  If $n\in \{4,5,6,9,10,14,18\}$ the indicators of all the simple objects of the category $\Cat{\S_n}{\tilde \S_{n-2}}$ are $0$ or $1$. For all other $n$ the subcategory $\Cat{\A_n}{\tilde \S_{n-2}}$ contains a simple object with Frobenius-Schur indicator $-1$.
\end{Thm}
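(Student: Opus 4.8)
The plan is to reduce everything, via the indicator formula of \cref{Prop:nutwo-gen}, to a small number of twisted group indicators, and then to read off the exceptional set from the representation theory of the resulting groups. First I would make $\tilde\S_{n-2}$ explicit. Writing $\S_2'$ for the pointwise stabiliser of $\{1,2\}$ in $\S_n$, the centraliser of $(12)$ is $C_{\S_n}((12))=\langle(12)\rangle\times\S_2'\cong\Z/2\times\S_{n-2}$, and $\tilde\S_{n-2}=\A_n\cap\bigl(\langle(12)\rangle\times\S_2'\bigr)$ is precisely the kernel of the ``total sign'' homomorphism $\langle(12)\rangle\times\S_2'\to\Z/2$. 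The map sending an even $\sigma\in\S_{n-2}$ to itself and an odd $\sigma$ to $(12)\sigma$ is then a group isomorphism $\S_{n-2}\xrightarrow{\ \sim\ }\tilde\S_{n-2}$ (this uses $n\ge4$, so that $\S_{n-2}$ has an odd element and the kernel has index exactly two). In particular $H:=\tilde\S_{n-2}$ preserves the partition $\{1,2\}\mid\{3,\dots,n\}$, acting as $e$ or $(12)$ on $\{1,2\}$.

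By \cref{Prop:nutwo-gen} only objects $\Obj g\chi$ with $g^2\in H$ and $\ord(g)$ a power of two can have nonzero indicator. Since $H$ preserves $\{1,2\}$, so does $g^2$; this forces $\{1,2\}$ either to meet at most two $g$-orbits, each of length $\le2$, or to lie in a single $g$-orbit which is a $4$-cycle $(1\,a\,2\,c)$. Multiplying $g$ on both sides by elements of $H$, absorbing even ``tails'' in $\S_{\{3,\dots,n\}}$ (which already lie in $H$), and conjugating, I would reduce to a short list of double-coset representatives: $g\in\S_{\{3,\dots,n\}}$; $g=(2\,3)$; $g=(2\,3)h$ with $h$ an odd $2$-element of $\S_{\{4,\dots,n\}}$; and the decisive family $g=(1\,3)(2\,4)h$ or $g=(1\,3\,2\,4)h$ with $h$ a $2$-element of $\S_{\{5,\dots,n\}}$. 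For every representative except the last family, a direct computation of $S:=\Stab_H(gH)$ shows that $\hat S=S\sqcup gS$ is an isomorphic copy of one of $\S_k$, $\Z/2\times\A_k$, or a twisted-diagonal $\tilde\S_k\cong\S_k$; each such group is totally orthogonal, or at least has no quaternionic irreducible character (being assembled from symmetric and alternating groups and a cyclic $2$-group, using that no alternating group has an irreducible of indicator $-1$ \cite{MR963550}). Then \eqref{eq:22}--\eqref{eq:23}, and in many cases already \cref{Cor:nutwo-spec}, give $\nu_2(g,\chi)\in\{0,1\}$.

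For the decisive family one computes $S\cong\Z/2\times\S_{n-4}$ and $\hat S\cong\{(a,\sigma)\in D_4\times\S_{n-4}:\psi(a)=\operatorname{sgn}\sigma\}$ for an epimorphism $\psi\colon D_4\to\Z/2$; for the even representative $g=(1\,3)(2\,4)$ one gets $\ker\psi$ equal to the normal Klein four-subgroup of $D_4$. Since $S$ is totally orthogonal, $\nu_2(\chi)=1$ for every $\chi\in\Irr(S)$, so \eqref{eq:22} becomes $\nu_2(g,\chi)=\nu_2\bigl(\Ind_S^{\hat S}\chi\bigr)-1$. As each $\hat\chi\in\Irr(\hat S)$ occurs in $\Ind_S^{\hat S}\chi$ for a suitable $\chi$, the value $-1$ is attained by some simple in this family exactly when $\hat S$ has a non-self-dual irreducible character, i.e.\ exactly when $\hat S$ is not ambivalent.

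It remains to decide when $\hat S$ is ambivalent. An element $(a,\sigma)\in\hat S$ is conjugate in $\hat S$ to $(a^{-1},\sigma^{-1})$ iff one can choose a $D_4$-element conjugating $a$ to $a^{-1}$ and an $\S_{n-4}$-element conjugating $\sigma$ to $\sigma^{-1}$ with matching $\psi$-value and sign. Running through the few types of $a$, this always succeeds except when $a$ lies in $\ker\psi$ with centraliser contained in $\ker\psi$ — which happens precisely for the non-central involutions of the normal Klein four-group — forcing an \emph{even} conjugator for $\sigma$, so the test reduces to whether $\sigma$ is conjugate to $\sigma^{-1}$ in $\A_{n-4}$. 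Hence $\hat S$ is ambivalent iff $\A_{n-4}$ is, and by \cite{MR0240187} the group $\A_m$ is ambivalent exactly for $m\in\{1,2,5,6,10,14\}$ (and trivially for $m\le1$), i.e.\ $\A_{n-4}$ is ambivalent iff $n\in\{4,5,6,9,10,14,18\}$. For those $n$ every relevant $\hat S$ is ambivalent, so all indicators lie in $\{0,1\}$; for all other $n$ the representative $g=(1\,3)(2\,4)\in\A_n$ yields a simple object of $\Cat{\A_n}{\tilde\S_{n-2}}$ with Frobenius--Schur indicator $-1$. I expect the main obstacle to be the bookkeeping of Step 2 — proving the listed representatives really exhaust all double cosets with nonzero indicator, that the even tails genuinely absorb, and that the small or degenerate $n$ (where some $\hat S$ collapses, e.g.\ to $\Z/2\times\Z/2$ or to a cyclic group) still conform to the dichotomy — together with the precise identification of $\psi$ and the ambivalence computation for $\hat S$.
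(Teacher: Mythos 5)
Your overall strategy is the same as the paper's: identify $\tilde\S_{n-2}$, invoke \cref{Prop:nutwo-gen} to restrict to $g^2\in H$, whittle the relevant double cosets down to a handful of representatives, and for the decisive ones compute $S\cong\Z/2\times\S_{n-4}$ and reduce the indicator to data about $\A_{n-4}$. For the even representative $g=(13)(24)$ your identification of $\hat S$ as $\{(a,\sigma)\in D_4\times\S_{n-4}:\psi(a)=\operatorname{sgn}\sigma\}$ with $\ker\psi$ the normal Klein four-group is correct, and your ambivalence analysis of that group does reduce to ambivalence of $\A_{n-4}$; this recovers the paper's conclusion $\nu_2(g,\chi)=\nu_2(\theta|_{\A_{n-4}})-1$ and the exceptional set $\{4,5,6,9,10,14,18\}$, including the existence of the $-1$ inside $\Cat{\A_n}{\tilde\S_{n-2}}$. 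Two points, however, are not closed. First, a minor one: ``$\hat S$ ambivalent $\Rightarrow$ all indicators in $\{0,1\}$'' needs you to exclude quaternionic characters of $\hat S$, since by \eqref{eq:23} a constituent $\hat\chi$ with $\nu_2(\hat\chi)=-1$ would produce the value $-2$ or $-3$, not $-1$; you assert absence of quaternionic characters for the easy representatives but not for the decisive one. This is fixable (e.g.\ from the fact that $S$ is totally orthogonal of index two in $\hat S$, so $\nu_2(\hat\chi)=\tfrac12(\nu_2(\hat\chi|_S)+\nu^g(\cdot))\ge 0$), but it must be said.

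The substantive gap is the odd representative of the decisive family. Your ambivalence test is carried out only for the Klein-kernel $\hat S$; you flag that the kernel changes for the other representative but then apply the same conclusion. For $g=(1\,3\,2\,4)$ (equivalently $(13)(24)(56)$, which is in the same double coset) one finds $\ker\psi=\langle(1\,3\,2\,4)\rangle\cong\Z/4$, the elements of $D_4$ whose inverting coset has constant $\psi$-value are now the two $4$-cycles, and since that coset lies \emph{outside} $\ker\psi$ the criterion becomes ``every even $\sigma$ is conjugate to $\sigma^{-1}$ by an \emph{odd} permutation of $\S_{n-4}$'' --- which is not ambivalence of $\A_{n-4}$. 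For a split class exactly one of ``inverted by an even element'' and ``inverted by an odd element'' holds, so the two $\hat S$'s are genuinely different groups with different reality properties; e.g.\ for $n=9$ the element $((1\,3\,2\,4),\tau)$ with $\tau$ a $5$-cycle fails your test even though $\A_5$ is ambivalent. Equivalently, in the direct computation the substitution $\sigma\mapsto\sigma(12)$ turns $\theta((\sigma(12))^2)\,\eta(\epsilon(\sigma))$ into $\theta(\rho^2)\,\eta(-\epsilon(\rho))$, and the extra sign for $\eta$ nontrivial changes the answer from $\nu_2(\theta|_{\A_{n-4}})-1$ to $1-\nu_2(\theta|_{\A_{n-4}})$. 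So your argument does not establish that the odd representative contributes no $-1$ for the listed exceptional $n$, and this is precisely the most delicate step (the paper's own proof asserts at this point that one obtains ``the same expression as for the previously treated case''; you should verify that claim rather than inherit it, since the sign above puts it in doubt for $n\in\{9,10,14,18\}$). Until this case is settled your proof of the first assertion of the theorem is incomplete.
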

\begin{proof}First, we consider $H=\tilde \S_{n-2}$ and $G=\S_n$.
    Let $g\in \S_n\setminus\tilde \S_{n-2}$ satisfy $g^2\in \tilde \S_{n-2}$. This means that $g^2$ either fixes or switches $1$ and $2$.

  If $g^2$ fixes $1$ and $2$, then both can only occur in transpositions of the cycle decomposition of $g$. If they are in the same cycle then $g\in H$.

  If only one of $1$ and $2$ occurs in a transposition, say $2$, then after multiplying with elements of $\A_2':=\S_2'\cap \A_n\subset H$ we can assume that $g=(23)g'$ where $g'\in\{(),(45)\}$.

  If both $1$ and $2$ occur in transpositions, then after multiplying with elements of $\A_2'$ we can assume that $g=(13)(24)g'$ with $g'\in\{(),(56)\}$.

  Now if $g^2$ switches $1$ and $2$, then $1$ and $2$ occur in a four-cycle in $g$, and $n\geq 6$ since $g$ is even. After multiplying with elements of $\A_2'$ we can assume that this four-cycle is $(1423)$. But $(1423)(12)(56)=(13)(24)(56)$, so that after multiplying further with elements of $\A_2'$ we can assume that $g=(13)(24)g'$ with $g'\in\{(),(56)\}$.

  We have thus four cases to treat:
  \begin{itemize}
  \item $g=(23)$, with $\Stab_H(gH)=\A_3'\subset C_G(g)$, and thus $\nu_2(g,\chi)=\nu_2(\chi)\in\{0,1\}.$
  \item $g=(23)(45)$ with $S=\Stab_G(gH)=\A_3'$ and $\hat S=(\S_3'\cup(23)\S_3')\cap \A_n\cong \S_{n-3}$. Thus $\nu_2(g,\chi)=\nu_2\left(\Ind_{S}^{\hat S}(\chi)\right)-\nu_2(\chi)\in\{0,1\}$.
  \item $g=(13)(24)$. Let $x\in H$. Then $g\adhit x\in H$ iff $g\adhit x$ fixes or switches $1$ and $2$, iff $x$ fixes or switches $3$ and $4$. Thus
  \begin{equation*}
    S=\Stab_G(gH)=\A_n\cap\left(S'_4\sqcup (12)S'_4\sqcup (34)S'_4\sqcup (12)(34)S'_4\right).
  \end{equation*}
  Note that conjugation by $g$ switches the two middle terms in the disjoint union, without affecting the factors in $\S_4'$. Writing $T=\A_n\cap((34)\S_4'\sqcup \S_4')\cong \S_{n-4}$ we have a group isomorphism $T\times\langle(12)(34)\rangle\cong S$ given by multiplication,
    and thus an isomorphism $\S_{n-4}\times \{\pm 1\}\cong S$ under which conjugation by $g$ corresponds to the automorphism
    \begin{align*}
      \tau\colon \S_{n-4}\times \{\pm 1\}&\to \S_{n-4}\times\{\pm 1\}\\
      (\sigma,x)&\mapsto (\sigma,x\epsilon(\sigma))
    \end{align*}
    where $\epsilon(\sigma)$ denotes the sign. An irreducible character $\chi$ of $S$ can be identified with the product $\theta\times\eta$ of a character $\theta$ of $\S_{n-4}$ and a character $\eta$ of $\{\pm 1\}$.
    \begin{align*}
      \nu_2(g,\chi)&=\nu^\tau(\theta\times\eta)\\
      &=\frac1{2(n-4)!}\sum_{\substack{\sigma\in \S_{n-4}\\x=\pm 1}}(\theta\times\eta)\left((\sigma,x)\tau(\sigma,x)\right)\\
      &=\frac1{2(n-4)!}\sum_{\substack{\sigma\in \S_{n-4}\\x=\pm 1}}\theta(\sigma^2)\eta(x^2\epsilon(\sigma))\\
      &=\frac1{(n-4)!}\sum_{\sigma\in \S_{n-4}}\eta\epsilon(\sigma)\theta(\sigma^2)
    \end{align*}
    So that $\nu_2(g,\chi)=\nu_2(\chi)$ if $\eta$ is the trivial character.  If $\eta$ is the nontrivial character, then further
    \begin{align*}
      \nu_2(g,\chi)&=\frac1{(n-4)!}\sum_{\sigma\in \S_{n-4}}\epsilon(\sigma)\theta(\sigma^2)\\
      &=\frac1{(n-4)!}\sum_{\sigma\in \A_{n-4}}\theta(\sigma^2)-\frac 1{(n-4)!}\sum_{\sigma\in \S_{n-4}\setminus \A_{n-4}}\theta(\sigma^2)\\
      &=\frac1{(n-4)!}\sum_{\sigma\in \A_{n-4}}\theta(\sigma^2)-\frac 1{(n-4)!}\sum_{\sigma\in \A_{n-4}}\theta((\sigma(12))^2)\\
      &=\frac 12\nu_2(\theta|_{\A_{n-4}})-\frac 12\nu^{(12)}(\theta|_{\A_{n-4}})\\
      &=\frac12\nu_2(\theta|_{\A_{n-4}})-\frac12\left(\nu_2(\theta|_{\A_{n-4}}\upharpoonright^{\S_{n-4}})-\nu_2(\theta|_{\A_{n-4}})\right)\\
      &=\nu_2(\theta|_{\A_{n-4}})-\frac12\nu_2(\theta|_{\A_{n-4}}\upharpoonright^{\S_{n-4}})\\
      &=\nu_2(\theta|_{\A_{n-4}})-1.
    \end{align*}
    Now if $\A_{n-4}$ is ambivalent, then $\nu_2(\theta|_{\A_{n-4}})$ equals one or two, but if not, there exists $\theta$ where it vanishes, giving $\nu_2(g,\chi)=-1$.

    Note that since $g\in \A_n$, the object with indicator $-1$ that we have (possibly) found is in the subcategory $\C(\A_n,\tilde \S_{n-2})$.
  \item $g=(13)(24)(56)$. By the same reasoning as before,
    $S(g)=\A_n\cap\left(\S_4'\disj (12)S'_4\disj(34)S'_4\disj(12)(34)S'_4\right),$
    but this time conjugation by $g$ switches the two middle terms while conjugating the $\S_4'$-factors involved by $(56)$. We thus get an isomorphism $\S_{n-4}\times\{\pm 1\}\cong S(g)$ under which conjugation by $g$ corresponds to
        \begin{align*}
      \tau\colon \S_{n-4}\times \{\pm 1\}&\to \S_{n-4}\times\{\pm 1\}\\
      (\sigma,x)&\mapsto ((12)\sigma(12),x\epsilon(\sigma)).
    \end{align*}
    With notations as above
        \begin{align*}
      \nu_2(g,\chi)&=\nu^\tau(\theta\times\eta)\\
      &=\frac1{2(n-4)!}\sum_{\substack{\sigma\in \S_{n-4}\\x=\pm 1}}(\theta\times\eta)\left((\sigma,x)\tau(\sigma,x)\right)\\
      &=\frac1{2(n-4)!}\sum_{\substack{\sigma\in \S_{n-4}\\x=\pm 1}}\theta(\sigma(12)\sigma(12))\eta(x^2\epsilon(\sigma))\\
      &=\frac1{(n-4)!}\sum_{\sigma\in \S_{n-4}}\eta\epsilon(\sigma)\theta(\sigma^2)
    \end{align*}
    which is the same expression as for the previously treated case. Note that since now $g$ is odd, the object with indicator $-1$ we have (possibly) found now belongs to $\C(\S_n,\tilde\S_{n-2})\setminus\C(\A_n,\tilde\S_{n-2})$.
  \end{itemize}
\end{proof}
\begin{Thm}\label{thm:7}
  Let $n\geq 4$. If $n\in \{4,5,6,10\}$, then the indicators of all the simples in the category $\C(\S_{n+1},\tilde\S_{n-2})$ are zero or one. For all other $n$ the subcategory $\C(\A_{n+1},\tilde\S_{n-2})$ contains a simple object with Frobenius-Schur indicator $-1$.
\end{Thm}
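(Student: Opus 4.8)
We take $G=\S_{n+1}$ and $H=\tilde \S_{n-2}$, viewed as a subgroup of $\S_n\subset G$. Since $\C(\A_{n+1},H)$ is a subcategory of $\C(\S_{n+1},H)$, it suffices to describe all indicator values of $\C(\S_{n+1},H)$ while recording, for each simple $\Obj g\chi$, whether $g$ is even. By \cref{Prop:nutwo-gen} we may assume $g^2\in H$ and $\ord(g)$ a power of two; as $H\subset\S_n$, the element $g^2$ fixes $n+1$, so either $g$ fixes $n+1$ (hence $g\in\S_n$), or $n+1$ lies in a transposition $(j,n+1)$ of the cycle decomposition of $g$. These are the two cases to treat.

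In the first case $\nu_2(g,\chi)$ is computed by the very formula \eqref{eq:24} that governs $\C(\S_n,H)$, and depends only on $H$, $g$, $\chi$; so it is an indicator occurring in $\C(\S_n,H)$, and \cref{thm:6} applies directly. In particular, for $n\notin\{4,5,6,9,10,14,18\}$ the even representative $g=(13)(24)$ from the proof of \cref{thm:6} gives a simple of indicator $-1$ in $\C(\A_n,H)\subset\C(\A_{n+1},H)$, while for $n$ in that exceptional set all indicators arising this way lie in $\{0,1\}$.

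In the second case write $g=tg'$ with $t=(j,n+1)$ and $g'$ fixing both $j$ and $n+1$, so $g'^2=g^2\in H$ and $g't=tg'$. Conjugating $g$ by an element of $H$ affects neither its double coset nor its indicator; since $H$ acts transitively on $\{3,\dots,n\}$ and contains an element interchanging $1$ and $2$, we may assume $j=n$ or $j=2$. In each case we apply \cref{thm:1} with this $t$, $F=\S_n$, and $H=\tilde \S_{n-2}$; the conditions $t^2=e$, $F\cap t\adhit H\subset H$, and $\Stab_H(tH)\subset C_G(t)$ are checked once $H':=\Stab_H(tH)$ is computed. For $j=n$ one finds $H'=\tilde \S_{n-3}$ (the even permutations fixing $n$ and $n+1$, fixing or interchanging $\{1,2\}$, and permuting $\{3,\dots,n-1\}$), so $\nu_2(g,\chi,H)=\nu_2(g',\chi,\tilde \S_{n-3})$ is an indicator of $\C(\S_{n-1},\tilde \S_{n-3})$, i.e.\ of the category of \cref{thm:6} with $n$ replaced by $n-1$; for $n=4$ this category is $\C(\S_3,\{e\})$, all of whose indicators are $0$ or $1$. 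For $j=2$ one finds instead $H'=\A_{n-2}$, the alternating group on $\{3,\dots,n\}$, so $\nu_2(g,\chi,H)=\nu_2(g',\chi,\A_{n-2})$ is an indicator of $\C(\S_{n-1},\A_{n-2})$, which is $0$ or $1$ by \cref{thm:3}.

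It remains to combine the cases. The branch $j=2$ never produces $-1$. The branch $j=n$ produces $-1$ exactly when $\C(\S_{n-1},\tilde \S_{n-3})$ has a simple of indicator $-1$, i.e.\ (by \cref{thm:6}) when $n-1\notin\{4,5,6,9,10,14,18\}$; whenever this holds one has $n-1\ge 7$, so in particular both representatives $(13)(24)$ and $(13)(24)(56)$ from the proof of \cref{thm:6} are available, and since $t$ is an odd permutation we may use the odd one, so that $g=tg'$ is even and the resulting $-1$ object lies in $\C(\A_{n+1},H)$. Together with the first case, no simple of indicator $-1$ is produced exactly for $n\in\{4,5,6,9,10,14,18\}\cap\{4,5,6,7,10,11,15,19\}=\{4,5,6,10\}$, and for these $n$ the three cases above show every indicator of $\C(\S_{n+1},H)$ lies in $\{0,1\}$. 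I expect the main obstacle to be this final, essentially combinatorial step — the interaction of the two exception sets, and the parity bookkeeping needed to place the $-1$ object in the alternating subcategory — together with the more routine but error-prone identification of $H'$ in \cref{thm:1} (namely $\tilde \S_{n-3}$ when $j=n$ but $\A_{n-2}$ when $j\in\{1,2\}$).
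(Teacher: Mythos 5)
Your proof is correct and follows essentially the same route as the paper: split according to whether $n+1$ is fixed by $g$, paired with a letter in $\{3,\dots,n\}$, or paired with a letter in $\{1,2\}$; use \cref{thm:1} to reduce the $(n,n+1)$ branch to \cref{thm:6} at level $n-1$; and intersect the two exception sets, with the parity bookkeeping handled by choosing the odd representative $(13)(24)(56)$ so that $g=tf$ is even. The only divergence is the $(2,n+1)$ branch: the paper computes $\Stab_H(gH)$ directly (getting $\A_2'$ or $\A_3'$) and identifies the indicator as a possibly twisted indicator of an alternating-group character via \cref{thm:2}, whereas you apply \cref{thm:1} a second time to land in $\C(\S_{n-1},\A_{n-2})$ and quote \cref{thm:3}; both are valid, and yours is arguably the more uniform packaging.
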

\begin{proof}
 Consider $ g\in \S_{n+1}\setminus \S_n $ such that
$ g^2\in\tilde\S_{n-2} $.
This means that $n+1$ occurs in a transposition of $g$; after
conjugating with an element of $\tilde S_{n-2}$ we can assume this
is one of the transpositions $(n,n+1)$ and $(2,n+1)$.

  Assume first that $g=\sigma(2,n+1)$ with $\sigma\in\S_n$ fixing $2$. Let $x\in \tilde\S_{n-2}$. If $g\inv\adhit x\in \tilde\S_{n-2}$ then $g\inv\adhit x$ fixes $n+1$, and thus $x$ fixes $2$, and thus $x$ fixes $1$, and $x\in\A_2'$.

  If $\sigma$ fixes $1$, then $\sigma\in\S_2'$, any $x\in\A_2'$ belongs to $\Stab_{\tilde\S_{n-2}}(g\tilde\S_{n-2})$, and any $\nu_2(g,\chi)$ is either an indicator of $A_2'$ or a twisted indicator $\nu^\sigma(\chi)$, hence at any rate zero or one.

  If $\sigma$ does not fix $1$, then we can assume $g=(13)(2,n+1)\sigma'$ with $\sigma'\in\S_3'$. If $x\in\tilde\S_{n-2}$ satisfies $g\inv\adhit x\in\S_{n-2}$, then $x$ fixes $1,2$ and $3$, so that $x\in\A_3'$. Thus any $\nu_2(g,\chi)$ is either an indicator of $\A_3'$ or a twisted indicator $\nu^{\sigma'}(\chi)$, hence equals zero or one.

  Summing up, the case that $n+1$ occurs in the transposition $(2,n+1)$ in $g$ does not yield new objects with indicator $-1$.
  
  Assume now that $g=f(n,n+1)$, with $f\in \S_{n-1}$. Then we are in the situation of \cref{thm:1}, with $F=S_n$, $t=(n,n+1)$ and $C_{S_{n+1}}(t)=S_{n-1}$; therefore, the indicator values for such elements are exactly the indicator values for the simples in $\C(S_{n-1},\tilde S_{n-3})$ associated to $f$ running through the elements of $S_{n-1}$. If $n=4$ then $\tilde S_{n-3}$ is trivial, while for $n>4$ the preceding theorem shows that the indicator value $-1$ is possible if and only if $n-1\not\in\{4,5,6,9,10,14,18\}$, or $n\not\in\{5,6,10,11,15,19\}$, and the element $x$ can then be chosen odd, so that $g$ is even. $n=5,6,10$ are the only cases where there is not already an object with Frobenius-Schur indicator $-1$ in $\C(\A_n,\tilde\S_{n-2})\subset\C(\A_{n+1},\tilde\S_{n-2})$.
\end{proof}
\begin{Thm}\label{thm:8}
  Let $n\geq 4$ and $k\geq 2$. For $n\in\{4,5,6\}$ the indicators of all the simples in the category $\C(\S_{n+k},\tilde\S_{n-2})$ are zero or one. For all other $n$ the subcategory $\C(\A_{n+k},\tilde\S_{n-2})$ contains a simple object with Frobenius-Schur indicator $-1$.
\end{Thm}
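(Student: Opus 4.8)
The plan is to argue by strong induction on $k\geq 2$, treating the cases $k=0$ and $k=1$ as already settled (\cref{thm:6} and \cref{thm:7}) and using \cref{thm:3} for the auxiliary categories attached to alternating subgroups. By \cref{Prop:nutwo-gen}, the only simples $\Obj g\chi$ of $\C(\S_{n+k},\tilde\S_{n-2})$ that can have nonzero indicator have $\ord(g)$ a power of two and $g^2\in\tilde\S_{n-2}$; since $\tilde\S_{n-2}$ fixes each of $n+1,\dots,n+k$ and $g^2$ does too, every one of these letters is either a fixed point of $g$ or lies in a transposition of its disjoint cycle decomposition. This trichotomy — fixed; in a transposition with a letter $>n$; in a transposition with a letter $\le n$ — is what drives the induction.

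Next I would record the reductions. If some $j>n$ is fixed by $g$, then $\nu_2(g,\chi)$ is unchanged when $g$ is regarded inside $\Stab_{\S_{n+k}}(j)\cong\S_{n+k-1}$, reducing us to $\C(\S_{n+k-1},\tilde\S_{n-2})$, i.e. the present statement with $k-1$ in place of $k$ (\cref{thm:7} if $k=2$, the inductive hypothesis if $k\ge 3$). If some $j>n$ lies in a transposition $(i,j)$ of $g$ with $i>n$ as well, this transposition commutes with $\tilde\S_{n-2}$ and with the remaining cycles of $g$, so \cref{rem:1} strips it off and gives $\C(\S_{n+k-2},\tilde\S_{n-2})$, the statement with $k-2$ in place of $k$ (\cref{thm:6}, \cref{thm:7} or induction). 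Otherwise each of $n+1,\dots,n+k$ lies in a transposition of $g$ with the other entry $\le n$; writing $g=(a,n+k)g'$ with $a\le n$ for one such transposition, one applies \cref{thm:1} with $t=(a,n+k)$, $f=g'$, $F=\Stab_{\S_{n+k}}(n+k)$. A short computation shows that $H':=\Stab_{\tilde\S_{n-2}}(t\,\tilde\S_{n-2})$ is the group $\A_{n-2}$ of even permutations of $\{3,\dots,n\}$ when $a\in\{1,2\}$, and a conjugate of $\tilde\S_{n-3}$ when $a\in\{3,\dots,n\}$; in both cases $H'$ fixes $a$ and $n+k$, which is exactly what makes the hypotheses of \cref{thm:1} hold. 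Thus $\nu_2(g,\chi)$ is identified either with an indicator in $\C(\S_{n+k-2},\A_{n-2})$, always $0$ or $1$ by \cref{thm:3} since $n-2<n+k-2$, or with one in $\C(\S_{n+k-2},\tilde\S_{n-3})$, which is the present statement for $(n-1,k-1)$. For $n\in\{4,5,6\}$ every instance so reached is good: the first two keep $n\in\{4,5,6\}$, which lies in the exceptional sets of both \cref{thm:6} and \cref{thm:7}; in the last one $n-1\in\{3,4,5\}$, where $\tilde\S_{n-3}$ is either trivial ($n=4$) or $n-1\in\{4,5\}\subseteq\{4,5,6,10\}$ once more, so \cref{thm:7} or induction applies. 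Hence the first assertion.

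For $n\notin\{4,5,6\}$ I would instead produce one simple of $\C(\A_{n+k},\tilde\S_{n-2})$ with indicator $-1$. Since $\nu_2(g,\chi,\tilde\S_{n-2})$ depends only on $g$, $\chi$ and $\tilde\S_{n-2}$, any simple of indicator $-1$ found in $\C(\A_m,\tilde\S_{n-2})$ with $m\le n+k$ persists to $\C(\A_{n+k},\tilde\S_{n-2})$. For $n\notin\{9,10,14,18\}$ such a simple already exists in $\C(\A_n,\tilde\S_{n-2})$ by \cref{thm:6}; for $n\in\{9,14,18\}$ it exists in $\C(\A_{n+1},\tilde\S_{n-2})$ by \cref{thm:7}. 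Only $n=10$ remains: here I would pick distinct $a,b\in\{3,\dots,10\}$ and a permutation $g_0\in\A_8$ supported on $\{1,2\}$ and two further letters of $\{3,\dots,10\}\setminus\{a,b\}$, together with a character $\chi_0$, so that $\Obj{g_0}{\chi_0}$ is a simple of indicator $-1$ in $\C(\A_8,\tilde\S_6)$ — available because $8\notin\{4,5,6,9,10,14,18\}$ — and set $g=(a,11)(b,12)\,g_0$. Then $g$ is even, hence lies in $\A_{10+k}$ (this is where $k\ge 2$ is used, so that $11,12$ are available), and \cref{thm:1} applied with $t=(a,11)(b,12)$ and $F=\Stab_{\S_{10+k}}(11)\cap\Stab_{\S_{10+k}}(12)$ — observing that $\Stab_{\tilde\S_8}(t\,\tilde\S_8)$ is precisely the copy of $\tilde\S_6$ above — yields $\nu_2(g,\chi_0,\tilde\S_8)=\nu_2(g_0,\chi_0,\tilde\S_6)=-1$.

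I expect the genuine obstacle to be exactly the case $n=10$: it is the one value for which neither previously settled case $k\in\{0,1\}$ already furnishes an object of indicator $-1$ inside the alternating subcategory, so one must construct one, and the delicate point there is parity — the element $t$ fed into \cref{thm:1} must be chosen \emph{even} so that $g=tg_0$ lands in $\A_{n+k}$ and not merely in $\S_{n+k}$. The rest is bookkeeping: checking that every reduction strictly decreases $k$ so that the induction terminates, and that $\{4,5,6\}$ sits inside the exceptional sets of \cref{thm:6} and \cref{thm:7}.
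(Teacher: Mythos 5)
Your proposal is correct and follows essentially the same route as the paper: induction on $k$, stripping the letters beyond $n$ via \cref{rem:1} and \cref{thm:1} so as to land in the cases settled by \cref{thm:6}, \cref{thm:7} and \cref{thm:3}, and, for the one genuinely new case $n=10$, the same construction $g=t g_0$ with $t$ an even product of two transpositions pairing two letters $>n$ with letters in $\{3,\dots,n\}$ and $g_0$ a $-1$-indicator representative from $\C(\A_8,\tilde\S_6)$. The only (cosmetic) difference is your case split in the inductive step, where you treat $a\in\{1,2\}$ uniformly by applying \cref{thm:1} with $t=(a,n+k)$ and invoking \cref{thm:3} for $\A_{n-2}\subset\S_{n+k-2}$, instead of the paper's further subcases on whether $1$ is moved.
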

\begin{proof}
  Consider $H=\tilde\S_8\subset F=\S_{10}\subset G=\S_{12}$.

  With $t=(9,11)(10,12)\in\S_{12}$, $f=(13)(24)$, and $g=ft$ we are in the situation of \cref{thm:1}.
  Thus the indicators of the objects in $\C(\S_{12},\tilde\S_8)$ associated to $g\in S_{12}$ are the same as the indicators of the objects in $\C(\S_8,\tilde\S_6)$ associated to $f$; we have seen that $-1$ occurs among them.

  It remains to check that the indicator value $-1$ is impossible for $n\in\{4,5,6\}$; we will do this by induction on $k$. Assuming $\nu_2(g,\chi)\neq -1$ for $g\in\S_{n+k-1}$, we can consider $g\in\S_{n+k}\setminus\S_{n+k-1}$ such that $g^2\in H:=\tilde S_{n-2}$ and $\chi\in\Irr(S(g))$.
  In particular $g$ switches $n+k$ with an element $i\in\{1,\dots,n+k-1\}$.

  If $i\in\{n+1,\dots,n+k-1\}$ then $\nu_2(g,\chi)$ is the indicator of a simple in $\C(\S_{n+k-1},\tilde\S_{n-2})$ \cref{rem:1}, hence equals zero or one by the induction hypothesis.

  If $i\in\{3,\dots,n\}$, we can assume $i=n$. Thus $g=ft$ with $t=(n,n+k)$ and $f\in F=\{\sigma\in\S_{n+k-1}|\sigma(n)=n\}$. As $\Stab_H(tH)=\tilde\S_{n-3}\subset C_G(t)$ we are in the situation of \cref{thm:1}, and (up to renumbering), $\nu_2(g,\chi)$ is the indicator of a simple in $\C(S_{n+k-2},\tilde\S_{n-3})$, hence zero or one by induction hypothesis (if $n>4$, otherwise $\S_{n-3}$ is trivial).

  If $i\in\{1,2\}$, we can assume $i=2$. We then have to treat separately the subcases where $g$ fixes or moves $1$.

    If $g$ fixes $1$, then $g=tf$ with $t=(2,n+k)$ and $f\in F=\S_{n+k-1}$ fixing $1$ and $2$. We have $\Stab_H(tH)=\{\sigma\in H|\sigma(2)=2\}=A_2'\subset C_G(t)$, and by \cref{thm:1} we see that $\nu_2(g,\chi)$ is the indicator of a simple in $\C(\S_{n+k-1},\A'_2)$. These are zero or one by \cref{thm:3}.

    Finally, if $g$ moves $1$, then we can assume $g=ft$ with $t=(2,n+k)(13)$ and $f$ fixing $1,2,3$ and $n+k$. Any $x\in\Stab_H(tH)$ needs to fix $2$, hence $1$, so $t\adhit x$ fixes $3$. Conversely any $x\in H$ fixing $1,2$ and $3$ commutes with $t$, so $\Stab_H(tH)=A_3'\subset C_G(t)$. By \cref{thm:1} we conclude that $\nu_2(g,\chi)$ is the indicator of a simple in $\C(S_{n+k-1},A'_3)$, hence equals zero or one by \cref{thm:3}.
\end{proof}
\printbibliography

\end{document}